\newtheorem{theorem}{Theorem}[section]
\newtheorem{lemma}[theorem]{Lemma}
\newtheorem{proposition}[theorem]{Proposition}
\theoremstyle{definition}
\newtheorem{remark}[theorem]{Remark}
\newcommand{\R}{\mathbb{R}}
\newcommand{\N}{\mathbb{N}}
\newcommand{\Z}{\mathbb{Z}}
\newcommand{\cA}{{\mathcal A}}   
\newcommand{\cC}{{\mathcal C}}
\newcommand{\cH}{{\mathcal H}}
\newcommand{\cK}{{\mathcal K}}
\newcommand{\cO}{{\mathcal O}}
\newcommand{\cS}{{\mathcal S}}
\newcommand{\cW}{{\mathcal W}}
\newcommand{\mR}{{\mathbf R}}
\newcommand{\mK}{{\mathbf K}}
\newcommand{\weakto}{\rightharpoonup}
\newcommand{\eps}{\varepsilon}
\def\eps{\varepsilon}
\DeclareMathOperator{\supp}{supp}
\begin{document}

\title{On the periodic and asymptotically periodic nonlinear Helmholtz equation}
\author{Gilles Ev\'equoz}
\address{ Institut f\"ur Mathematik, Johann Wolfgang Goethe-Universit\"at,
Robert-Mayer-Str. 10, 60054 Frankfurt am Main, Germany}
\email{evequoz@math.uni-frankfurt.de}

\begin{abstract}
In the first part of this paper, the existence of infinitely many $L^p$-standing wave solutions for the nonlinear Helmholtz equation
$$
-\Delta u -\lambda u=Q(x)|u|^{p-2}u\quad\text{ in }\R^N
$$
is proven for $N\geq 2$ and $\lambda>0$, under the assumption that $Q$ be a nonnegative, periodic and bounded function and the exponent $p$ lies in the Helmholtz subcritical range.
In a second part, the existence of a nontrivial solution is shown in the case where the coefficient $Q$ is only asymptotically periodic.
\end{abstract}
\keywords{Nonlinear Helmholtz equation, variational method, pseudo-gradient flow}
\subjclass[2010]{35J20, 35J05}

\maketitle

\section{Introduction}
In this paper, we consider for $N\geq 2$ the semilinear equation
\begin{equation}\label{eq:33b}
- \Delta u -  \lambda u = Q(x)|u|^{p-2}u,\qquad x\in \R^N,
\end{equation}
where $\lambda>0$, $Q$ is a bounded and nonnegative function, and the exponent $p$ lies in the subcritical range 
$2_\ast<p<2^\ast$, where
$$
2_\ast:=\frac{2(N+1)}{N-1}\quad\text{and}\quad 
2^\ast=\left\{\begin{array}{ll} \frac{2N}{N-2} & \text{ if }N\geq 3 \\ \infty & \text{ if }N=2. \end{array}\right.
$$
We study existence of real-valued solutions $u$: $\R^N$ $\to$ $\R$ that decay to zero at infinity. 
Such solutions correspond, via the Ansatz
$$
\psi(t,x):=e^{i\sqrt{\lambda}t}u(x),
$$
to weakly spatially decaying, standing wave solutions of the Nonlinear Wave Equation
$$
\partial_{tt}\psi-\Delta\psi = Q(x)|\psi|^2\psi,\qquad t\in\R,\ x\in\R^N.
$$
Under general assumptions on $Q$, the problem \eqref{eq:33b} cannot be
handled using direct variational methods, since its solutions (if any) are not expected 
to decay faster than $O(|x|^{-\frac{N-1}{2}})$ at infinity and will therefore 
not belong to the space $L^2(\R^N)$. 
Recently, a dual method has been set up, which allows to study this problem variationally.
Using it, the existence of nontrivial solutions lying in $L^q(\R^N)$ for all $q\geq p$ 
and admitting an expansion of the form
$$
\lim\limits_{R\to\infty}\frac1R\int_{B_R}\left|u(x)+2\left(\frac{2\pi}{\sqrt{\lambda} |x|}\right)^{\frac{N-1}{2}}
\text{Re}\left[e^{i\sqrt{\lambda}|x|-\frac{i(N-1)\pi}{4}} g_u\left(\frac{x}{|x|}\right)\right]\right|^2\, dx=0,
$$
where $g_u(\xi)=-\frac{i}{4}\left(\frac{\lambda}{2\pi}\right)^{\frac{N-2}{2}}\mathcal{F}\left(Q|u|^{p-2}u\right)(\sqrt{\lambda}\xi)$, $\xi\in S^{N-1}$,
was proven (see \cite{evequoz-weth-dual,e-helm-2d}). Here, $\mathcal{F}$ denotes the Fourier transform. More precisely, infinitely many nontrivial bounded $W^{2,p}(\R^N)$-solutions were obtained under the assumption 
$\lim\limits_{|x|\to\infty}Q(x)= 0$ (see also \cite{evequoz:2015-1} where more general nonlinearities were considered). 
For periodic $Q$, only the existence of a nontrivial solution (pair) was proven, and one of the main goals of the present paper is to show that \eqref{eq:33b} in fact possesses infinitely many geometrically distinct solutions in $W^{2,p}(\R^N)$. 

In the case where $\lambda<0$, or more generally when $-\lambda$ is replaced by a bounded 
and periodic function $V$ satisfying $\inf V>0$, results giving the existence of infinitely many 
solutions for \eqref{eq:33b} go back to the work of Coti Zelati and Rabinowitz \cite{CR92}. 
Shortly after, Alama and Li \cite{alama-li92b} using a dual method, and Kryszewski and Szulkin
\cite{kryszewski-szulkin98} using a linking argument and a new degree theory, 
extended this result to the case where $0$ lies in a spectral gap of the 
Schr\"odinger operator $-\Delta+V(x)$. More recently, multiplicity results were given for the periodic Schr\"odinger equation 
with more general nonlinearities (see e.g. \cite{ackermann06,ackermann-weth05,chen08,ding-lee06,szulkin-weth09,squassina-szulkin}
 and the references therein).

In the present paper, we show that the proof scheme developed by Szulkin and Weth in \cite[Theorem 1.2]{szulkin-weth09} for the 
periodic Schr\"odinger equation and used recently by Squassina and Szulkin \cite{squassina-szulkin} in the case of a logarithmic
 nonlinearity, can be adapted to work in combination with the dual variational method.
In particular, using the framework of \cite{evequoz-weth-dual}, we obtain the following result.
\begin{theorem}\label{thm:infinite_periodic}
Let $2_\ast<p< 2^\ast$ and consider $Q\in L^\infty(\R^N)\backslash\{0\}$ 
nonnegative and $\Z^N$-periodic. Then \eqref{eq:33b} has infinitely many 
geometrically distinct pairs of strong solutions 
$\pm u_n\in W^{2,q}(\R^N)$, $p\leq q<\infty$.
\end{theorem}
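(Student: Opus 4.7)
The plan is to couple the dual variational framework of \cite{evequoz-weth-dual} with the abstract Nehari-Pankov and equivariant minimax machinery of Szulkin and Weth \cite{szulkin-weth09}. Via the substitution $v=Q|u|^{p-2}u$ with inverse $u=\mathcal{R}v$, where $\mathcal{R}$ denotes the real Helmholtz resolvent (bounded $L^{p'}(\R^N)\to L^p(\R^N)$ by the Kenig-Ruiz-Sogge / Stein-Tomas estimates used in \cite{evequoz-weth-dual}), strong solutions of \eqref{eq:33b} are in bijection with critical points of the even, $\Z^N$-invariant dual functional
\begin{equation*}
J(v)=\frac{1}{p'}\int_{\R^N}Q^{-p'/p}|v|^{p'}\,dx-\frac{1}{2}\int_{\R^N}v\,\mathcal{R}v\,dx
\end{equation*}
on a closed subspace of $L^{p'}(\R^N)$. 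Since $p'<2$, the first term is actually subquadratic, so along each direction $v$ with $\langle v,\mathcal{R}v\rangle>0$ the map $t\mapsto J(tv)$ has the classical Nehari shape with a unique positive maximum, and $J$ itself is strongly indefinite with $\Z_2$-symmetry.

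Next I would introduce the generalised Nehari manifold $\mathcal{M}\subset L^{p'}(\R^N)$ of nontrivial $v$ with $J'(v)[v]=0$ (together with the orthogonality to a suitable negative subspace, if the indefiniteness of $\mathcal{R}$ requires it). The program is to establish: (i) $\mathcal{M}$ is a $C^1$-manifold and each admissible ray meets $\mathcal{M}$ in exactly one point, realising the maximum of $J$ along the ray; (ii) $c_1:=\inf_{\mathcal{M}}J>0$; (iii) minimising sequences on $\mathcal{M}$ are bounded and produce a Palais-Smale sequence for the free functional $J$. These properties follow from a quantitative uniqueness-of-maximum argument in the spirit of \cite[Proposition 2.3]{szulkin-weth09}, with the $L^{p'}$-norm in the role of the super-quadratic nonlinearity.

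The heart of the proof is a decomposition lemma for bounded Palais-Smale sequences, analogous to \cite[Lemma 2.13]{szulkin-weth09}: up to subsequence, every such $(v_n)$ splits as
\begin{equation*}
v_n=\sum_{k=0}^{\ell}\tau_{y_n^k}\hat v_k+r_n,\qquad\|r_n\|_{p'}\to 0,
\end{equation*}
where the $\hat v_k$ are critical points of $J$, $y_n^k\in\Z^N$ satisfy $y_n^0=0$ and $|y_n^k-y_n^j|\to\infty$ for $k\neq j$, and the energies add up. If \eqref{eq:33b} had only finitely many geometrically distinct solutions, this lemma would force the critical set of $J$ at each prescribed level to be discrete modulo $\Z^N$. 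One then runs the equivariant minimax scheme of \cite[Theorem 2.14]{szulkin-weth09}: setting $\Sigma_n=\{A\subset\mathcal{M}:A=-A\text{ compact, Krasnoselski genus}\geq n\}$ and $c_n=\inf_{A\in\Sigma_n}\sup_{v\in A}J(v)$, a $\Z^N$-equivariant pseudo-gradient deformation on $\mathcal{M}$ shows that each $c_n$ is a critical value and $c_n\to\infty$. This contradicts the finiteness assumption and produces the required infinite sequence of pairs $\pm v_n$. The strong regularity $\pm u_n\in W^{2,q}(\R^N)$ then follows from the $L^p$--$L^q$ bounds on $\mathcal{R}$ and elliptic bootstrap, as in \cite{evequoz-weth-dual}.

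The main obstacle is the splitting lemma. Unlike in the definite Schr\"odinger setting, the bilinear form $(v,w)\mapsto\langle v,\mathcal{R}w\rangle$ is nonlocal and indefinite, so a Brezis-Lieb decomposition applies cleanly only to the $L^{p'}$-term. Controlling the cross-terms $\langle\tau_{y_n^k}\hat v_k,\mathcal{R}(\tau_{y_n^j}\hat v_j)\rangle$ for $k\neq j$ requires quantitative off-diagonal decay of the Helmholtz resolvent, which has to be extracted from the oscillatory-integral estimates of Stein-Tomas-Agmon type underlying \cite{evequoz-weth-dual}. A secondary technical point is transferring the Szulkin-Weth monotonicity of $t\mapsto J(tv)$ on the generalised Nehari manifold to the $L^{p'}$-setting with $p'<2$, where the absence of a Hilbert structure forces one to replace orthogonal projections by strict convexity estimates in $L^{p'}$.
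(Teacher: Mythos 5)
Your proposal has the right general shape---dual variational framework plus a Szulkin--Weth style contradiction argument assuming finitely many geometrically distinct critical orbits---but the implementation diverges from the paper in several places, and one of the divergences is a genuine gap.

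\textbf{The setup is incorrect.} You substitute $v=Q|u|^{p-2}u$ and arrive at the dual functional
$J(v)=\frac{1}{p'}\int Q^{-p'/p}|v|^{p'}\,dx-\frac12\int v\,\mathcal R v\,dx$.
This is not well defined on $L^{p'}(\R^N)$ under the hypotheses of the theorem: $Q$ is only assumed nonnegative and periodic, not bounded away from zero, so the weight $Q^{-p'/p}$ can be identically $+\infty$ on sets of positive measure. The paper instead uses the substitution $v=Q^{1/p'}|u|^{p-2}u$, which places the weight \emph{inside} the quadratic term by working with $\mathbf K v=Q^{1/p}\mathbf R(Q^{1/p}v)$ and keeps $J$ a well-defined $C^1$ functional on all of $L^{p'}(\R^N)$ with no weighted norm. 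Critical points then solve $|v|^{p'-2}v=Q^{1/p}\mathbf R(Q^{1/p}v)$, which forces $v=0$ where $Q=0$ automatically. Your formulation only makes sense when $\inf Q>0$, which is a strictly smaller class than what the theorem claims. Everything downstream of the functional definition (Nehari structure, splitting, minimax) has to be reworked once this is corrected, so this is not a cosmetic slip.

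\textbf{The route after the setup is genuinely different from the paper's, and also has some misdirected concerns.} You propose (a) a generalized Nehari--Pankov manifold, (b) a full Brezis--Lieb/profile-decomposition lemma for Palais--Smale sequences, and (c) a genus-based minimax on the manifold with $c_n\to\infty$. The paper does none of these. There is no strongly indefinite structure here and hence no negative subspace to split off: $J$ has plain mountain-pass geometry (Lemma \ref{lem:MP}), and the uniqueness of the maximum of $t\mapsto J(tv)$ along rays in $U^+$ is elementary from $p'<2$ via formula \eqref{eqn:t_v}; no Hilbert structure or ``strict convexity estimates in $L^{p'}$'' are needed to replace projections. More importantly, the paper avoids a splitting lemma entirely. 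Its central technical input is Lemma \ref{lem:dist_PS_sequ}: a dichotomy for any two PS sequences $(v_n),(w_n)$---either $\|v_n-w_n\|_{p'}\to 0$ or $\limsup\|v_n-w_n\|_{p'}\ge\kappa$, where $\kappa>0$ is the minimal separation of critical points from Lemma \ref{lem:dist_crit_pts}. The proof combines the reverse H\"older inequality (to control the difference of the $|\cdot|^{p'-2}\cdot$ nonlinearity with the subquadratic exponent $p'<2$) with the nonvanishing Theorem~\ref{thm:nonvanishing}. This is the analogue of Lemma 2.14 in Szulkin--Weth and is much lighter than a profile decomposition. Your concern about ``quantitative off-diagonal decay of the Helmholtz resolvent'' to control cross-terms is also misplaced: $\mathbf R$ is bounded and hence weakly sequentially continuous from $L^{p'}$ to $L^p$, so cross-terms involving divergent translations vanish by weak convergence without any kernel decay. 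The actual difficulties in a splitting lemma here would be elsewhere (iterated extraction and strong convergence of the remainder), but the point is moot since the paper does not need it.

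\textbf{The minimax machinery is also different.} You place a genus-based minimax on the Nehari manifold and argue $c_n\to\infty$. The paper instead uses Benci's pseudoindex $i^*$ on compact symmetric subsets of the ambient space $L^{p'}(\R^N)$, a locally Lipschitz $\Z_2$-equivariant pseudo-gradient flow in the ambient space (Lemmas \ref{lem:max_exist_time}--\ref{lem:eps_0}) rather than a flow constrained to $\mathcal M$, and derives the contradiction not from $d_k\to\infty$ but from the strict increase $d_k<d_{k+1}$ for all $k$, which already yields infinitely many distinct critical levels. The discreteness of PS sequences (Lemma \ref{lem:dist_PS_sequ}) is exactly what the deformation argument needs. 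This is cleaner than forcing the deformation to respect a Nehari manifold in an $L^{p'}$ space where no natural Riemannian structure is available.

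In summary: fix the dual functional (the $Q^{1/p}$ must sit inside the resolvent term, not as an external weight); then, if you want to stay closest to the paper, replace the splitting lemma by the PS-dichotomy Lemma \ref{lem:dist_PS_sequ} (reverse H\"older $+$ nonvanishing), and run the deformation with Benci's pseudoindex in the ambient space rather than genus on $\mathcal M$. The splitting-lemma route may be salvageable, but it is more work, not less, and the off-diagonal-decay obstruction you anticipate is not where the difficulty lies.
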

The proof (see Section~\ref{sec:multiple}) is based on the fact that, assuming that only finitely many geometrically 
distinct solutions exist, one is able to show the discreteness of Palais-Smale sequences for the energy functional 
and then apply a deformation argument to get a contradiction. 
The main technical point, when passing to the dual method is to find suitable estimates to prove the analogue of 
\cite[Lemma 2.14]{szulkin-weth09}. This is done in Lemma \ref{lem:dist_PS_sequ} using the reverse H\"older inequality 
and the recently proven nonvanishing property for the subcritical Helmholtz equation \cite[Theorem 3.1]{evequoz-weth-dual} for $N\geq 3$ and \cite[Theorem 3.1]{e-helm-2d} for $N=2$.

Among one of the problems most extensively studied in the context of nonlinear Schr\"odin\-ger 
equation is the case of asymptotically periodic or asymptotically autonomous nonlinearity.
Starting with the work by Ding and Ni \cite{ding.ni:86} based on Lions' concentration-compactness principle, many authors studied this case 
(see e.g. \cite{bahri-lions97,cerami06, li-szulkin02} and the references therein). 
In Section \ref{sec:asympt_periodic} of the present paper, we give an analogous existence result for the nonlinear Helmholtz equation.
We namely study \eqref{eq:33b} in the case where the coefficient $Q$ satisfies $|Q(x)-Q_\infty(x)|\to 0$ as $|x|\to\infty$ for some periodic
function $Q_\infty$. Under assumptions close to those of Ding and Ni \cite{ding.ni:86}, we obtain the existence of a nontrivial solution 
(see Theorem ~\ref{thm:asympt_per}). 
Our proof uses the fibering method applied to the dual energy functional, and we show that the dual ground-state is attained 
under these assumptions.

Let us mention that the above results can be extended to slightly more general nonlinearities. Indeed, using a dual variational
method in Orlicz spaces (as discussed e.g. in \cite{evequoz:2015-1}) it can be shown that the problem
\begin{equation}\label{eqn:33b_generalized}
- \Delta u - \lambda u = \sum_{i=1}^mA(x)^{p_i}Q_i(x)|u|^{p_i-2}u,\qquad x\in \R^N
\end{equation}
with $\lambda>0$ has infinitely many geometrically distinct solutions under the assumptions:

(i) $2_\ast<p_1\leq p_2\leq \ldots\leq p_m<2^\ast$,

(ii) $Q_1, \ldots, Q_m\in L^\infty(\R^N)\backslash\{0\}$ are $\Z^N$-periodic with 
$0<\inf\limits_{\R^N}Q_i\leq \sup\limits_{\R^N}Q_i<\infty$
for all $1\leq i \leq m$, and

(iii) $A\in L^\infty(\R^N)\backslash\{0\}$ is nonnegative and $\Z^N$-periodic.

\medskip

\noindent Theorem~\ref{thm:asympt_per} can also be extended to \eqref{eqn:33b_generalized}
under the assumptions (i), (ii) and

(iii') $A\in L^\infty(\R^N)\backslash\{0\}$ satisfies $|A(x)-A_\infty(x)|\to 0$, as $|x|\to\infty$,
and $A_\infty\leq A$ a.e. on $\R^N$ for some nonnegative, bounded and $\Z^N$-periodic
function $A_\infty$.

\section{Some preliminary results}
We begin by fixing some notation and recalling some of the results previously obtained on the semilinear problem \eqref{eq:33b}. For more details and proofs, we refer the reader to \cite{evequoz-weth-dual,e-helm-2d}.

Throughout this paper, we consider $N\geq 2$ and write
$2_\ast:=\frac{2(N+1)}{N-1}$ and $2^\ast:=\frac{2N}{N-2}$ if $N\geq 3$ and $2^\ast:=\infty$ if $N=2$.
For simplicity and without loss of generality, we shall take $\lambda=1$ in the Helmholtz 
equation \eqref{eq:33b} 
and therefore look at the problem
\begin{equation*}
- \Delta u - u = Q(x)|u|^{p-2}u,\qquad x\in \R^N.  
\end{equation*}
The outgoing radial fundamental solution of the Helmholtz operator $-\Delta-1$ in $\R^N$ is then given by
$$
\Phi(x)=\frac{i}{4}(2\pi|x|)^{N-2}H^{(1)}_{\frac{N-2}{2}}(|x|), \quad x\neq 0,
$$
where $H^{(1)}_{\nu}$ denotes the first Hankel function (or Bessel function of the third kind, see \cite{lebedev}),
and we denote by $\Psi$ its real part. As a consequence of results by Kenig, Ruiz and Sogge \cite{KRS87}, the mapping
$\mR$: $\cS(\R^N)$ $\to$ $\cS'(\R^N)$,
$$
\mR f:=\Psi\ast f=\text{Re}(\Phi)\ast f, \quad f\in\cS(\R^N),
$$
where $\cS(\R^N)$ and $\cS'(\R^N)$ denote respectively the Schwartz space and the space of tempered distributions,
can be extended in a unique way to a continuous operator $\mR$: $L^{p'}(\R^N)$ $\to$ $L^p(\R^N)$
for each $2_\ast\leq p\leq 2^\ast$ (resp. $p<\infty$ in the case $N=2$). 
Here and in the following, $p'=\frac{p}{p-1}$ denotes the conjugate exponent to $p$.
Moreover, for every $f\in L^{p'}(\R^N)$, the function $u=\mR f$ belongs to $W^{2,p'}_{\text{loc}}(\R^N)$ and 
solves the Helmholtz equation $-\Delta u-u=f$ in the strong sense.

Let $Q\in L^\infty(\R^N)$ be a nonnegative function, $Q\not\equiv 0$ and define for $2_\ast\leq p\leq 2^\ast$ with $p<\infty$,
the Birman-Schwinger type operator $\mK$: $L^{p'}(\R^N)$ $\to$ $L^p(\R^N)$ by setting
$$
\mK v:=Q^\frac1p \mR(Q^\frac1p v), \quad v\in L^{p'}(\R^N).
$$
This operator is symmetric, i.e., $\int_{\R^N}v\mK(w)\, dx=\int_{\R^N}w\mK(v)\, dx$ holds for $v, w\in L^{p'}(\R^N)$.
Moreover, when $p<2^\ast$ it is locally compact in the sense that for every bounded and measurable set $B\subset\R^N$,
the operator $1_B\mK$: $L^{p'}(\R^N)$ $\to$ $L^p(\R^N)$ is compact (see \cite[Lemma 4.1]{evequoz-weth-dual}). 
Here and in the sequel $1_B$ will stand for the characteristic function of the set $B$.

Consider the energy functional $J$: $L^{p'}(\R^N)$ $\to$ $\R$
\begin{equation}\label{eqn:energy_funct}
\begin{aligned}
J(v)&=\frac{1}{p'}\int_{\R^N}|v|^{p'}\, dx - \frac12 \int_{\R^N}Q(x)^\frac1pv(x) \mR(Q^\frac1p v)(x)\, dx
    =\frac1{p'}\|v\|_{p'}^{p'}-\frac12 \int_{\R^N}v\mK(v)\, dx.
\end{aligned}
\end{equation}
We note that $J$ is of class $C^1$ with
\begin{equation}\label{eqn:gradient}
J'(v)w=\int_{\R^N}\Bigl(|v|^{p'-2}v-\mK(v)\Bigr)w\, dx\quad\text{for all }v,w\in L^{p'}(\R^N).
\end{equation}
Furthermore, every critical point of $J$ corresponds to a solution of \eqref{eq:33b}.
More precisely, $v\in L^{p'}(\R^N)$ satisfies $J'(v)=0$ if and only if it solves the integral equation
$$
|v|^{p'-2}v=Q^\frac1p \mR(Q^\frac1p v).
$$
Setting $u=\mR(Q^\frac1p v)\in L^p(\R^N)$, it follows that
$$
u= \mR(Q|u|^{p-2}u),
$$
and $u\in W^{2,q}(\R^N)$, $q\geq p$, is therefore a strong solution of \eqref{eq:33b} (see
\cite[Lemma 4.3 and Theorem 4.4]{evequoz-weth-dual} and \cite[Theorem 1.1]{e-helm-2d}
concerning the regularity and asymptotic behavior of $u$).
We note also that $u=0$ if and only if $v=0$.

The geometry of the functional $J$ is of mountain pass type 
(cf. the original paper by Ambrosetti and Rabinowitz \cite{ambrosetti-rabinowitz73}):
\begin{lemma}\label{lem:MP}
\begin{itemize}
	\item[(i)] There exist $\alpha>0$ and $0<\rho<1$ such that $J(v)\geq\alpha>0$ for all 
	$v\in L^{p'}(\R^N)$ with $\|v\|_{p'}=\rho$.
	\item[(ii)] For every $m \in \N$, there exists an $m$-dimensional subspace $\cW_m \subset \cC_c^\infty(\R^N)$ 
	and some $R=R(\cW_m)>0$ such that $J(v) \le 0$ for every $v\in \cW_m$ with $\|v\|_{p'} \ge R$.
\end{itemize}
\end{lemma}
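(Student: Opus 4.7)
The plan is to handle the two parts separately. For (i), I combine H\"older's inequality, the continuity $\mR:L^{p'}(\R^N)\to L^p(\R^N)$ recalled above, and the $L^\infty$-bound on $Q$ to estimate
$$\left|\int_{\R^N}v\mK(v)\,dx\right|\leq \|Q^{1/p}v\|_{p'}\,\|\mR(Q^{1/p}v)\|_p\leq C\|Q\|_\infty^{2/p}\|v\|_{p'}^2,$$
so that $J(v)\geq\frac{1}{p'}\|v\|_{p'}^{p'}-\frac{C}{2}\|v\|_{p'}^2$. Since $p>2_\ast$ forces $p'<2$, for any $0<\rho<1$ small enough the $L^{p'}$-term dominates and yields the required $\alpha>0$.

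For (ii), along a ray $\{tv:t\geq 0\}$ in a subspace one has
$$J(tv)=\frac{t^{p'}}{p'}\|v\|_{p'}^{p'}-\frac{t^2}{2}\int_{\R^N}v\mK(v)\,dx.$$
Because $p'<2$, it suffices to produce an $m$-dimensional subspace $\cW_m\subset\cC_c^\infty(\R^N)$ on which $v\mapsto\int_{\R^N}v\mK(v)\,dx$ is bounded below by a positive multiple of $\|v\|_{p'}^2$; (ii) then follows by choosing $R=R(\cW_m)$ large enough, uniformly in $v$ on the unit sphere of $\cW_m$ (all norms being equivalent in finite dimensions).

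My construction of $\cW_m$ proceeds in two steps. First, I pick a seed $\varphi\in\cC_c^\infty(\R^N)$ with $\int_{\R^N}\varphi\,\mK(\varphi)\,dx>0$; such a $\varphi$ exists because $\mR$ acts on the Fourier side essentially by the principal-value multiplier $(|\xi|^2-1)^{-1}$, which takes arbitrarily large positive values for $|\xi|$ slightly above $1$, and because $Q\not\equiv 0$ on a set of positive measure. Second, I exploit the $\Z^N$-periodicity of $Q$: the translates $\varphi_k(x):=\varphi(x-y_k)$, $y_k\in\Z^N$, all satisfy $\int_{\R^N}\varphi_k\,\mK(\varphi_k)\,dx=\int_{\R^N}\varphi\,\mK(\varphi)\,dx$, while the off-diagonal terms $\int_{\R^N}\varphi_k\,\mK(\varphi_j)\,dx$ tend to zero as $|y_k-y_j|\to\infty$ thanks to the decay $\Psi(z)=O(|z|^{-(N-1)/2})$ of the Helmholtz kernel combined with the $L^\infty$-bound on $Q$. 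Choosing $y_1,\ldots,y_m$ with $|y_k-y_j|$ large enough for $k\neq j$, the supports $\supp\varphi_k$ become pairwise disjoint and the Gram matrix $\bigl(\int_{\R^N}\varphi_k\,\mK(\varphi_j)\,dx\bigr)_{k,j=1}^m$ becomes strictly diagonally dominant; hence $\cW_m:=\spann(\varphi_1,\ldots,\varphi_m)$ has dimension $m$ and carries a positive definite quadratic form.

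The main obstacle is the seed function: producing $\varphi\in\cC_c^\infty(\R^N)$ with $\int_{\R^N}\varphi\,\mK(\varphi)\,dx>0$ is a genuinely Helmholtz-specific, Fourier-side statement reflecting the sign-indefiniteness of the real resolvent $\mR$ (in contrast to the positive-operator situation in the Schr\"odinger case $\lambda<0$). Granted such a $\varphi$, the translation-and-diagonal-dominance argument above is routine, and (ii) follows.
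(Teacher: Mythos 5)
The paper does not reprove Lemma~\ref{lem:MP}; it cites \cite[Lemmas 4.2 and 5.1]{evequoz-weth-dual} and \cite[proof of Theorem~1.3(a)]{e-helm-2d} for the argument, so a step-by-step comparison is not possible. Your part~(i) is correct and is the standard estimate: H\"older together with the $L^{p'}\!\to L^p$ bound for $\mR$ gives $\bigl|\int v\,\mK v\,dx\bigr|\leq C\|Q\|_\infty^{2/p}\|v\|_{p'}^2$, and since $p\geq 2_\ast>2$ forces $p'<2$, the term $\frac1{p'}\rho^{p'}$ dominates $\frac{C}{2}\rho^2$ for small $\rho$, yielding the desired $\alpha>0$.

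Part~(ii), however, contains two genuine gaps. First, you never actually produce the seed $\varphi\in\cC_c^\infty(\R^N)$ with $\int\varphi\,\mK\varphi\,dx>0$: you offer a Fourier-side heuristic about the multiplier $\mathrm{p.v.}\,(|\xi|^2-1)^{-1}$, but the integrand involves $\mR(Q^{1/p}\varphi)$, and you have no control on the Fourier transform of $Q^{1/p}\varphi$, so the heuristic does not close. Second, and more seriously, your construction of $\cW_m$ requires the $\Z^N$-periodicity of $Q$ in order to make the diagonal entries $\int\varphi_k\,\mK\varphi_k\,dx$ equal across translates. But Lemma~\ref{lem:MP} is stated in Section~2 for general nonnegative bounded $Q\not\equiv 0$, and it is in fact applied in Section~4 to the asymptotically periodic coefficient $Q$, which is \emph{not} $\Z^N$-periodic (see the remark just above Lemma~\ref{lem:c_b} that $0<c,c_\infty<\infty$ ``follows from Lemma~\ref{lem:MP}''). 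An argument that relies on periodicity cannot serve here.

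Both gaps can be closed without periodicity and without Fourier analysis. Since $\Psi(z)\to+\infty$ as $z\to 0$ (like $c|z|^{2-N}$ for $N\geq 3$, like $-c\log|z|$ for $N=2$) and $\Psi$ is continuous away from the origin, one may fix $\delta_0>0$ with $\Psi>0$ on $B_{\delta_0}\setminus\{0\}$. The essential support of $Q$ has positive measure, hence contains $m$ distinct points $x_1,\dots,x_m$; set $d_0=\min_{i\neq j}|x_i-x_j|>0$. For $\delta>0$ small, choose $\varphi_j\in\cC_c^\infty(B_\delta(x_j))$, $\varphi_j\geq 0$, $\varphi_j>0$ on $B_{\delta/2}(x_j)$, normalized by $\|Q^{1/p}\varphi_j\|_1=1$. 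Then
$$
\int\varphi_j\,\mK\varphi_j\,dx=\iint \Psi(x-y)\,Q^{1/p}(x)\varphi_j(x)\,Q^{1/p}(y)\varphi_j(y)\,dx\,dy
\;\geq\; \inf_{0<|z|\leq 2\delta}\Psi(z)\;\longrightarrow\;\infty
$$
as $\delta\to 0$, while
$$
\Bigl|\int\varphi_i\,\mK\varphi_j\,dx\Bigr|\leq \sup_{|z|\geq d_0/3}|\Psi(z)|=:C(d_0)<\infty \quad(i\neq j),
$$
for all $\delta<d_0/3$. Hence for $\delta$ small the Gram matrix of $(v,w)\mapsto\int v\,\mK w\,dx$ on $\cW_m:=\spann(\varphi_1,\dots,\varphi_m)$ is strictly diagonally dominant with positive diagonal, hence positive definite, and disjoint supports give $\dim\cW_m=m$. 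The remainder of your argument (equivalence of norms in finite dimensions, $p'<2$, choice of $R$) then goes through. In short, the diagonal-dominance strategy is sound, but the seed must be built in physical space from the local positivity of $\Psi$, and the separation must be achieved by shrinking supports rather than by translating a single periodic profile.
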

The proof of these results can be found in \cite[Lemma 4.2 and Lemma 5.1]{evequoz-weth-dual}
and \cite[Proof of Theorem 1.3(a)]{e-helm-2d}.

\medskip

We mention a useful property of the Palais-Smale sequences for $J$ which can be deduced
from the results in \cite{evequoz-weth-dual}. For the reader's convenience, we give a short proof.
\begin{lemma}\label{lem:PS_sequences}
Suppose $2_\ast\leq p<2^\ast$, and let $(v_n)_n\subset L^{p'}(\R^N)$ be a Palais-Smale sequence for $J$. Then $(v_n)_n$ 
is bounded in $L^{p'}(\R^N)$ and (up to a subsequence) there exists $v\in L^{p'}(\R^N)$ such that $J'(v)=0$, 
$v_n\weakto v$ weakly in $L^{p'}(\R^N)$ and $J(v)\leq\liminf\limits_{n\to\infty} J(v_n)$.
Moreover, for every bounded and measurable set $B\subset\R^N$ there holds $1_Bv_n\to 1_Bv$ strongly in $L^{p'}(\R^N)$ 
and $1_B|v_n|^{p'-2}v_n\to 1_B|v|^{p'-2}v$ strongly in $L^p(\R^N)$.
\end{lemma}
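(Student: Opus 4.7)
The plan is to follow the standard dual-variational Mountain-Pass recipe, exploiting that $p'<2$ (since $p>2_\ast>2$) in order to turn the Palais–Smale bound into coercivity, and then to extract local strong convergence from the local compactness of $\mathbf{K}$.

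First I would establish boundedness. Since $J'(v_n)v_n = \|v_n\|_{p'}^{p'}-\int v_n\mathbf{K}(v_n)\,dx$, the combination
$$
J(v_n)-\tfrac{1}{2}J'(v_n)v_n = \Bigl(\tfrac{1}{p'}-\tfrac{1}{2}\Bigr)\|v_n\|_{p'}^{p'}
$$
together with $|J(v_n)|\le C$ and $\|J'(v_n)\|\to 0$ yields
$$
\Bigl(\tfrac{1}{p'}-\tfrac{1}{2}\Bigr)\|v_n\|_{p'}^{p'}\le C+o(1)\|v_n\|_{p'}.
$$
Because $\tfrac{1}{p'}-\tfrac{1}{2}>0$ and $p'>1$, this forces $(v_n)$ to be bounded in $L^{p'}(\R^N)$. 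Passing to a subsequence, I obtain $v_n\weakto v$ in $L^{p'}(\R^N)$.

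Next I would pass to the limit locally. Fix a bounded measurable $B\subset\R^N$. By the local compactness of $\mathbf{K}$ recalled in the text, $\mathbf{1}_B\mathbf{K}(v_n)\to\mathbf{1}_B\mathbf{K}(v)$ strongly in $L^p(\R^N)$. Since $J'(v_n)=|v_n|^{p'-2}v_n-\mathbf{K}(v_n)\to 0$ in $L^p(\R^N)$, this gives
$$
\mathbf{1}_B|v_n|^{p'-2}v_n\longrightarrow \mathbf{1}_B\mathbf{K}(v)\qquad\text{in }L^p(\R^N).
$$
The nonlinear duality map $w\mapsto |w|^{p'-2}w$ is a homeomorphism between $L^{p'}$ and $L^p$ with inverse $g\mapsto|g|^{p-2}g$ (as follows from $(p'-1)(p-1)=1$), so the above convergence implies $\mathbf{1}_B v_n\to \mathbf{1}_B|\mathbf{K}(v)|^{p-2}\mathbf{K}(v)$ strongly in $L^{p'}(\R^N)$. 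Uniqueness of the weak limit identifies this limit with $\mathbf{1}_B v$, which simultaneously proves the two announced local strong convergences and the pointwise (a.e.) identity $|v|^{p'-2}v=\mathbf{K}(v)$ on $B$. Varying $B$ gives $J'(v)=0$.

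Finally, for the lower semicontinuity inequality I rewrite
$$
J(v_n)=\Bigl(\tfrac{1}{p'}-\tfrac{1}{2}\Bigr)\|v_n\|_{p'}^{p'}+\tfrac{1}{2}J'(v_n)v_n
$$
and likewise $J(v)=\bigl(\tfrac{1}{p'}-\tfrac{1}{2}\bigr)\|v\|_{p'}^{p'}$ using $J'(v)v=0$. Since $J'(v_n)v_n\to 0$ and $\|\cdot\|_{p'}^{p'}$ is weakly lower semicontinuous, $\liminf_{n\to\infty}J(v_n)\ge\bigl(\tfrac{1}{p'}-\tfrac{1}{2}\bigr)\|v\|_{p'}^{p'}=J(v)$.

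The only delicate step is the second one: extracting local strong convergence of $v_n$ itself (not just of its duality image) from the local compactness of $\mathbf{K}$. This works precisely because $w\mapsto|w|^{p'-2}w$ is a homeomorphism between $L^{p'}$ and $L^p$, so convergence of $|v_n|^{p'-2}v_n$ in $L^p(B)$ transfers back to convergence of $v_n$ in $L^{p'}(B)$; this is the main technical point I would isolate cleanly in the write-up.
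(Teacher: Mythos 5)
Your proof is correct and takes essentially the same approach as the paper: boundedness from $J(v_n)-\tfrac12 J'(v_n)v_n$, local compactness of $1_B\mK$ to obtain strong local $L^p$-convergence of $|v_n|^{p'-2}v_n$, the Hölder-type estimate for $|a|^{p-2}a-|b|^{p-2}b$ (which you package as the statement that $w\mapsto|w|^{p'-2}w$ is a homeomorphism $L^{p'}\to L^p$) to transfer this to strong $L^{p'}$-convergence of $1_Bv_n$, and weak lower semicontinuity of the norm for the $\liminf$ inequality. Your use of compactness is slightly more direct — you apply the compact operator $1_B\mK$ to the weakly convergent $v_n\weakto v$ and identify the limit at once, whereas the paper first derives a Cauchy estimate and identifies the limit afterwards — but this is the same underlying argument, not a different route.
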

\begin{proof}
To prove the boundedness of the sequence, we choose $C>0$ such that $J(v_n)\leq \frac{C}{2}$ and $\|J'(v_n)\|_\ast\leq C$ 
for all $n\in\N$. If $\|v_n\|_{p'}\geq 1$ we can write
$$
C\geq \frac{J(v_n)}{\|v_n\|_{p'}}-\frac12J'(v_n)\frac{v_n}{\|v_n\|_{p'}}=\left(\frac1{p'}-\frac12\right)\|v_n\|_{p'}^{p'-1}.
$$
As a consequence, we find for all $n$, $\|v_n\|_{p'}^{p'-1}\leq\max\{1,\left(\frac1{p'}-\frac12\right)^{-1}C\}$, since $1<p'<2$,
thus showing that $(v_n)_n$ is bounded in $L^{p'}(\R^N)$. Hence, going if necessary to a subsequence, we
may assume that $v_n\weakto v$ weakly in $L^{p'}(\R^N)$ for some $v\in L^{p'}(\R^N)$. Let $B\subset\R^N$ be
bounded and measurable. For $m, n\in\N$ and $\varphi\in L^{p'}(\R^N)$ we find that
$$
\int_B\left|(|v_n|^{p'-2}v_n-|v_m|^{p'-2}v_m)\varphi\right|\, dx\leq \|J'(v_n)-J'(v_m)\|_\ast \|\varphi\|_{p'}
+\|1_B\mK(v_n-v_m)\|_p \|\varphi\|_{p'}.
$$
Since $1_B\mK$ is compact, we infer from the above estimate
that $(1_B|v_n|^{p'-2}v_n)_n$ is a Cauchy sequence in $L^p(\R^N)$. Hence, there exists $z\in L^p(\R^N)$
such that $1_B|v_n|^{p'-2}v_n\to z$ strongly in $L^p(\R^N)$ and therefore
\begin{align*}
\int_{\R^N}\left|1_Bv_n-|z|^{p-2}z\right|^{p'}\, dx 
&\leq C\int_{\R^N}\left(|v_n|^{p'-1}+|z|\right)^{(p-2)p'}\left|1_B|v_n|^{p'-2}v_n-z\right|^{p'}\, dx\\
&\leq C\left(\|v_n\|^{p'}_{p'}+\|z\|_p^{p'}\right)\|1_B|v_n|^{p'-1}v_n-z\|_{p'}.
\end{align*}
As a consequence, we obtain $1_Bv_n\to |z|^{p-2}z$ as $n\to\infty$, strongly in $L^{p'}(\R^N)$. 
Moreover, by uniqueness of the weak limit,
there holds $z=1_B|v|^{p'-2}v$, and this gives the desired strong local convergence.

As a consequence, we find that for every $\varphi\in\cC^\infty_c(\R^N)$,
\begin{align*}
J'(v)\varphi&=\int_{\supp\varphi}|v|^{p'-2}v\varphi\, dx -\int_{\R^N}\varphi\mK(v)\, dx\\
&=\lim_{n\to\infty}\int_{\supp\varphi}|v_n|^{p'-2}v_n\varphi\, dx -\int_{\R^N}\varphi\mK(v_n)\, dx
=\lim_{n\to\infty}J'(v_n)\varphi=0,
\end{align*}
since $v_n\weakto v$. Hence $J'(v)=0$, and since the norm $\|\cdot\|_{p'}$ is weakly
lower sequentially continuous, we infer that
\begin{align*}
J(v)&= J(v)-\frac12J'(v)v=\left(\frac1{p'}-\frac12\right)\int_{\R^N}|v|^{p'}\, dx \\
&\leq\liminf_{n\to\infty}\left(\frac1{p'}-\frac12\right)\int_{\R^N}|v_n|^{p'}\, dx 
= \liminf_{n\to\infty}J(v_n)-\frac12J'(v_n)v_n=\liminf_{n\to\infty}J(v_n),
\end{align*}
and this concludes the proof.
\end{proof}

In the case where $2_\ast<p<2^\ast$ and the coefficient $Q$ is $\Z^N$-periodic, we proved
in \cite{evequoz-weth-dual} the existence of a nontrivial critical point for $J$. 
A crucial ingredient in our proof was the following nonvanishing property
(see \cite[Theorem 3.1]{evequoz-weth-dual} and \cite[Theorem 3.1]{e-helm-2d}).

\begin{theorem}\label{thm:nonvanishing}
Let $2_\ast< p<2^\ast$ and consider a bounded sequence $(v_n)_n\subset L^{p'}(\R^N)$ satisfying 
$\limsup \limits_{n\to\infty}\left|\int_{\R^N}v_n\mR v_n\, dx\right|>0.$ 
Then there exists $R>0$, $\zeta>0$ and a sequence
$(x_n)_n\subset\R^N$ such that, up to a subsequence, 
\begin{equation}\label{eqn:liminf2}
\int_{B_R(x_n)}|v_n|^{p'}\, dx \geq \zeta\quad\text{for all }n.
\end{equation}
\end{theorem}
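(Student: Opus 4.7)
My plan is to argue by contradiction. If the conclusion fails then, after passing to a subsequence along which $|\int v_n\,\mR v_n\,dx|\to c>0$, one may simultaneously arrange that $(v_n)$ vanishes in the sense of Lions, i.e.,
\[
\sup_{x\in\R^N}\int_{B_R(x)}|v_n|^{p'}\,dx\ \longrightarrow\ 0\quad\text{for every }R>0.
\]
I would then aim to show, under this vanishing hypothesis, that $\int_{\R^N}v_n\,\mR v_n\,dx\to 0$, contradicting $c>0$.

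The starting point is the two-scale structure of the kernel $\Psi=\mathrm{Re}(\Phi)$: from the Hankel asymptotics it has a locally integrable singularity at the origin and oscillates as $\Psi(x)=c_N|x|^{-(N-1)/2}\cos(|x|-\theta_N)+O(|x|^{-(N+1)/2})$ as $|x|\to\infty$. Fixing a smooth cutoff $\chi\in C_c^\infty(\R^N)$ with $\chi\equiv 1$ on $B_1$, I would split $\Psi=\Psi_1+\Psi_2$ with $\Psi_1=\chi\Psi$ compactly supported (absorbing the singularity) and $\Psi_2=(1-\chi)\Psi$ smooth and oscillatory. For the local piece, convolution with $\Psi_1$ only couples $v_n$ to its values on a fixed-radius ball; a H\"older/Young estimate combined with the $L^{p'}$-vanishing and the uniform $L^{p'}$-bound on $(v_n)$ forces $\bigl|\int v_n(\Psi_1*v_n)\,dx\bigr|\to 0$. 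For the oscillatory piece, boundedness $L^{p'}\to L^p$ of $v\mapsto\Psi_2*v$ follows from the Kenig--Ruiz--Sogge resolvent estimate, but upgrading to convergence to zero requires the full strength of the Stein--Tomas restriction theorem. In Fourier variables $\widehat{\Psi_2}$ inherits from $\widehat\Psi$ a principal-value singularity concentrated on the unit sphere $S^{N-1}$, and a dyadic decomposition of frequency space into shells $\{2^{-k-1}\le\bigl|\,|\xi|-1\,\bigr|\le 2^{-k}\}$, together with the Stein--Tomas bound on each shell (applicable precisely because $p\ge 2_\ast$), should transfer the physical-space vanishing of $v_n$ into decay of its sphere-restricted Fourier transform and ultimately yield $\int v_n(\Psi_2*v_n)\,dx\to 0$. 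Adding the two contributions gives the required contradiction.

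The main obstacle is the oscillatory piece. In the classical Lions vanishing argument for Schr\"odinger-type problems, Sobolev embedding closes the gap between $L^{p'}_{\mathrm{loc}}$-vanishing and $L^q$-convergence; here, however, only $L^{p'}$-boundedness is available and no Sobolev control can be invoked. The entire gain must come from the Fourier-analytic structure of $\mR$, specifically the restriction theorem, and it is the strict inequality $p>2_\ast$ (rather than equality) that provides the $\varepsilon$-room needed to sum the dyadic estimates.
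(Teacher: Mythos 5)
Your overall strategy---argue by contradiction assuming Lions vanishing, split the resolvent kernel $\Psi$ into a local piece $\Psi_1$ and an oscillatory far piece $\Psi_2$, and estimate each---is the one used in the proof this paper cites (Theorem~3.1 of \cite{evequoz-weth-dual} and of \cite{e-helm-2d}). Your treatment of the local piece is fine in spirit, although a single H\"older/Young estimate only gives boundedness: one also needs a covering of $\R^N$ by cubes of side comparable to the support of $\Psi_1$ and a discrete H\"older inequality in the cube index to convert the uniform local vanishing of $(v_n)$ into $\int v_n(\Psi_1 \ast v_n)\,dx\to 0$. The real gap is in the far piece. With a \emph{fixed} cutoff you try to force $\int v_n(\Psi_2\ast v_n)\,dx\to 0$ by ``transferring physical-space vanishing of $v_n$ into decay of its sphere-restricted Fourier transform.'' That transfer is not valid: Lions vanishing is an averaged $L^{p'}$ statement that carries no information about $\widehat{v_n}$ near $S^{N-1}$ (one can construct Lions-vanishing sequences whose Fourier transforms concentrate on the unit sphere), and the operator $v\mapsto\Psi_2\ast v$ from $L^{p'}$ to $L^p$ is translation invariant, hence not compact, so its quadratic form has no reason to vanish along a weakly null, Lions-vanishing sequence. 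The dyadic-shell plus Stein--Tomas scheme does not close this gap.

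The cited proof handles the far piece without using the vanishing of $v_n$ at all. It takes the cutoff radius $\rho$ to be a free parameter and proves that the operator norm of $v\mapsto (1_{\R^N\setminus B_\rho}\Psi)\ast v$ from $L^{p'}(\R^N)$ to $L^p(\R^N)$ tends to $0$ as $\rho\to\infty$. This is obtained by Riesz--Thorin interpolation between the Kenig--Ruiz--Sogge endpoint bound $L^{2_\ast'}\to L^{2_\ast}$, which is uniform in $\rho$, and the elementary $L^1\to L^\infty$ bound for convolution with the kernel $1_{\R^N\setminus B_\rho}\Psi$, whose $L^\infty$ norm is of order $\rho^{-(N-1)/2}$. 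This is exactly where the \emph{strict} inequality $p>2_\ast$ enters: it provides a positive interpolation weight on the decaying endpoint. Having made the far contribution small uniformly in $n$ by choosing $\rho$ large, the local piece (now supported in a ball of radius $\rho$) is killed by the Lions vanishing, and the contradiction follows. So the correct version of your plan replaces the fixed cutoff and the dyadic/restriction transfer by a $\rho$-dependent cutoff and an operator-norm decay estimate.
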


\section{Existence of infinitely many solutions in the periodic setting}\label{sec:multiple}
In this section, we prove the multiplicity result, Theorem \ref{thm:infinite_periodic}, 
announced in the introduction.
We assume throughout that $Q$ is bounded, nonnegative, $\not\equiv 0$ and $\Z^N$-periodic, and
let $\cK:=\{u\in L^{p'}(\R^N)\, :\, J'(u)=0\}$. Moreover, we denote by $\cA$ a symmetric subset of $\cK$ (i.e.
$\cA=-\cA$) which contains exactly one element from each orbit $\cO(w)=\{w(\cdot-y)\, :\, y\in\Z^N\}$. 
From now on, let us suppose by contradiction that 
\begin{center} 
{\em the set $\cA$ is finite.}
\end{center}

\noindent We follow below the ideas of the proof of \cite[Theorem 1.2]{szulkin-weth09} 
and \cite[Theorem 1.1]{squassina-szulkin}.

\begin{lemma}\label{lem:dist_crit_pts}
$\kappa:=\inf\{\|v-w\|_{p'}\, :\, v, w\in\cK, v\neq w\}>0$.
\end{lemma}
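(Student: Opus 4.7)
The plan is to argue by contradiction. Suppose there exist sequences $(v_n)_n,(w_n)_n\subset\cK$ with $v_n\neq w_n$ and $\|v_n-w_n\|_{p'}\to 0$. Because $Q$ is $\Z^N$-periodic, the set $\cK$ is $\Z^N$-translation invariant, so one may write $v_n=u_n(\cdot-x_n)$ and $w_n=\tilde u_n(\cdot-y_n)$ with $u_n,\tilde u_n\in\cA$ and $x_n,y_n\in\Z^N$. Using the standing assumption that $\cA$ is finite, I pass to a subsequence on which $u_n\equiv u$ and $\tilde u_n\equiv\tilde u$ are constant. If either $u$ or $\tilde u$ were zero, the translation-invariance of $\|\cdot\|_{p'}$ combined with $\|v_n-w_n\|_{p'}\to 0$ would force both to vanish, yielding $v_n=w_n=0$ and contradicting the choice of the sequences; so I reduce to $u,\tilde u\neq 0$. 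Translating by $x_n$, the hypothesis takes the form
\[
\|u-\tilde u(\cdot-z_n)\|_{p'}\to 0,\qquad z_n:=y_n-x_n\in\Z^N.
\]

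The natural dichotomy is now on the behavior of $(z_n)$. Suppose first that $(z_n)$ admits a bounded subsequence. The discreteness of $\Z^N$ lets me refine it to a constant subsequence $z_n\equiv z$, whence $u=\tilde u(\cdot-z)$. Thus $u$ and $\tilde u$ lie in the same orbit $\cO(\tilde u)$, and the defining property of $\cA$ forces $u=\tilde u$, so $u(\cdot-z)=u$. A short periodicity argument---partition $\R^N$ into translates $D+kz$ of a fundamental domain $D$ for the cyclic subgroup $z\Z$ and use
\[
\|u\|_{p'}^{p'}=\sum_{k\in\Z}\int_D|u|^{p'}\,dx
\]
---shows that a nonzero element of $L^{p'}(\R^N)$ cannot be invariant under a nontrivial translation. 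Hence $z=0$, so $x_n=y_n$ and $v_n=w_n$, contradicting the choice of the sequences.

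In the remaining case $|z_n|\to\infty$ I would invoke the standard fact that translations to infinity converge weakly to $0$ in $L^{p'}(\R^N)$ (verified on $C_c^\infty$ test functions by a change of variables and extended by density, which is legitimate since $1<p'<2$). This yields $\tilde u(\cdot-z_n)\weakto 0$; combined with the strong convergence $\tilde u(\cdot-z_n)\to u$, this forces $u=0$, contradicting the earlier reduction $u\neq 0$.

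The conceptual backbone is the translation-compactness dichotomy: modulo $\Z^N$-translations and the finiteness of $\cA$, two distinct critical points either already coincide (bounded $z_n$) or separate weakly (unbounded $z_n$), and neither alternative is compatible with $\|v_n-w_n\|_{p'}\to 0$. The only mildly delicate step is the triviality of $L^{p'}$-periodic functions; the rest is bookkeeping, and in particular neither Lemma~\ref{lem:PS_sequences} nor Theorem~\ref{thm:nonvanishing} is needed here---those tools come into play later, in the analogue of \cite[Lemma 2.14]{szulkin-weth09}.
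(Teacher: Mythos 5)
Your proof is correct and follows essentially the same route as the paper's: pass to a finite set $\cA$ of orbit representatives, extract a constant subsequence, and split on whether the relative $\Z^N$-shifts stay bounded (forcing $v_n=w_n$) or escape to infinity (where weak convergence to $0$ of the shifted profile gives a lower bound). The only inessential detour is in your bounded case: once $u=\tilde u(\cdot-z)$ you can conclude $v_n=\tilde u(\cdot-z-x_n)=\tilde u(\cdot-y_n)=w_n$ directly, so the orbit-representative and ``no nonzero $z$-periodic function in $L^{p'}$'' steps are not needed.
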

\begin{proof}
Let $(v_n)_n, (w_n)_n\subset\cK$ be sequences such that $v_n\neq w_n$ for all $n$ and $\|v_n-w_n\|_{p'}\to \kappa$,
as $n\to\infty$. Then there are sequences $(y_n)_n, (z_n)_n\subset\Z^N$ for which $\tilde{v}_n:=v_n(\cdot+y_n)\in \cA$
and $\tilde{w}_n:=w_n(\cdot+z_n)\in \cA$ for all $n$.
Since $\cA$ is finite, there holds (up to a subsequence) $\tilde{v}_n=\tilde{v}\in\cA$, $\tilde{w}_n=\tilde{w}\in\cA$ for all $n$,
and either $y_n-z_n=y_0\in\Z^N$ for all $n$, or $|y_n-z_n|\to\infty$.

In the case where $y_n-z_n=y_0\in\Z^N$ for all $n$, we find
$$
0<\|\tilde{v}(\cdot-y_0)-\tilde{w}\|_{p'}
=\|\tilde{v}_n(\cdot-y_n)-\tilde{w}_n(\cdot-z_n)\|_{p'}=\|v_n-w_n\|_{p'}=\kappa,
$$
since the orbits $\cO(\tilde{v})$ and $\cO(\tilde{w})$ are distinct.
In the second case, we may take without loss of generality $\tilde{v}\neq 0$ and we remark that
$\tilde{w}(\cdot+y_n-z_n)\weakto 0$ in $L^{p'}(\R^N)$. This gives
$$
\kappa=\liminf_{n\to\infty}\|\tilde{v}_n(\cdot-y_n)-\tilde{w}_n(\cdot-z_n)\|_{p'}
=\liminf_{n\to\infty}\|\tilde{v}-\tilde{w}(\cdot+y_n-z_n)\|_{p'}
\geq \|\tilde{v}\|_{p'}>0,
$$
and thus concludes the proof.
\end{proof}

\begin{lemma}\label{lem:dist_PS_sequ}
Let $2_\ast<p<2^\ast$ and consider two Palais-Smale sequences 
$(v_n)_n, (w_n)_n\subset L^{p'}(\R^N)$ for $J$. 
Then, either $\|v_n-w_n\|_{p'}\to 0$,  as $n\to\infty$, or $\limsup\limits_{n\to\infty}\|v_n-w_n\|_{p'}\geq \kappa$,
where $\kappa$ is given by Lemma~\ref{lem:dist_crit_pts}.
\end{lemma}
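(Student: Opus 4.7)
I argue by contradiction: assume $\|v_n - w_n\|_{p'} \not\to 0$, and pass to a subsequence (not relabelled) along which $\|v_n - w_n\|_{p'} \geq c > 0$. The plan is to produce distinct critical points $V, W \in \cK$ of $J$ with $\liminf_n \|v_n - w_n\|_{p'} \geq \|V - W\|_{p'}$; Lemma~\ref{lem:dist_crit_pts} then gives $\liminf_n \|v_n - w_n\|_{p'} \geq \kappa$ along the subsequence, hence $\limsup_n \|v_n - w_n\|_{p'} \geq \kappa$ for the original sequence. Lemma~\ref{lem:PS_sequences} ensures that $(v_n),(w_n)$ are bounded in $L^{p'}(\R^N)$; testing $J'(v_n) - J'(w_n) \to 0$ against the bounded sequence $u_n := v_n - w_n$ yields, via \eqref{eqn:gradient},
\[
\int_{\R^N}\bigl(|v_n|^{p'-2}v_n - |w_n|^{p'-2}w_n\bigr)\,u_n\,dx = \int_{\R^N} u_n\mK(u_n)\,dx + o(1).
\]

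The central step is a quantitative lower bound on this quantity. Since $p > 2_\ast > 2$ gives $1 < p' < 2$, I would use the elementary weighted monotonicity
\[
(|a|^{p'-2}a - |b|^{p'-2}b)(a - b) \geq (p'-1)(|a| + |b|)^{p'-2}(a - b)^2, \qquad a, b \in \R,
\]
combined with Hölder's inequality with conjugate exponents $2/p'$ and $2/(2-p')$ applied to the identity $|u_n|^{p'} = \bigl[(|v_n|+|w_n|)^{p'-2}u_n^2\bigr]^{p'/2}(|v_n|+|w_n|)^{(2-p')p'/2}$:
\[
\|u_n\|_{p'}^{p'} \leq \left(\int_{\R^N}(|v_n| + |w_n|)^{p'-2}u_n^2\,dx\right)^{p'/2}\bigl(\|v_n\|_{p'} + \|w_n\|_{p'}\bigr)^{p'(2-p')/2}.
\]
Boundedness of $(v_n), (w_n)$ in $L^{p'}$ now yields $\int_{\R^N} u_n\mK(u_n)\,dx \geq C\|u_n\|_{p'}^{2} + o(1) > 0$ eventually. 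Equivalently, setting $\tilde u_n := Q^{1/p}u_n$, one has $\limsup_n \int_{\R^N}\tilde u_n\mR(\tilde u_n)\,dx > 0$.

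I then apply the nonvanishing property (Theorem~\ref{thm:nonvanishing}) to the bounded sequence $(\tilde u_n) \subset L^{p'}(\R^N)$, producing $R, \zeta > 0$ and $(x_n) \subset \R^N$ with $\int_{B_R(x_n)}|\tilde u_n|^{p'}\,dx \geq \zeta$ along a subsequence. Replacing $x_n$ by its nearest point $t_n \in \Z^N$ and $R$ by $R + \sqrt N$, the $\Z^N$-periodicity of $Q$ makes $\mK$ equivariant under $\Z^N$-translations, so the shifted sequences $V_n := v_n(\cdot + t_n)$ and $W_n := w_n(\cdot + t_n)$ are again Palais-Smale for $J$ with identical norms and energies. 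Lemma~\ref{lem:PS_sequences} gives weak limits $V, W \in \cK$ (along a further subsequence) and strong $L^{p'}(B_R)$ convergence. A change of variables together with the periodicity of $Q$ translates the concentration bound into
\[
\int_{B_R}|Q^{1/p}|^{p'}|V_n - W_n|^{p'}\,dx = \int_{B_R(t_n)}|\tilde u_n|^{p'}\,dx \geq \zeta,
\]
and passing to the limit yields $\int_{B_R}|Q^{1/p}|^{p'}|V - W|^{p'}\,dx \geq \zeta > 0$, forcing $V \neq W$. Lemma~\ref{lem:dist_crit_pts} then gives $\|V - W\|_{p'} \geq \kappa$, and weak lower semicontinuity combined with $\|V_n - W_n\|_{p'} = \|v_n - w_n\|_{p'}$ finishes the argument.

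\medskip

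\noindent\textbf{Main obstacle.} The quantitative implication $\|u_n\|_{p'} \geq c > 0 \Rightarrow \int_{\R^N} u_n\mK(u_n)\,dx \geq c' > 0$ is the delicate ingredient. In the dual regime $1 < p' < 2$ the map $t \mapsto |t|^{p'-2}t$ is only monotone in a weighted sense: the naive inequality $(|a|^{p'-2}a - |b|^{p'-2}b)(a-b) \geq c|a-b|^{p'}$ fails, so a direct lower bound on the left-hand side by $\|u_n\|_{p'}^{p'}$ is unavailable. Pairing the weighted monotonicity with Hölder's inequality in the weight $(|v_n|+|w_n|)^{p'-2}$ recovers $\|u_n\|_{p'}^{p'}$ at the cost of the bounded factor $(\|v_n\|_{p'} + \|w_n\|_{p'})^{p'(2-p')/2}$; this is the ``reverse Hölder'' step advertised in the introduction. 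Once it is in place, the nonvanishing property and the orbit discreteness from Lemma~\ref{lem:dist_crit_pts} dispatch the remainder along the Szulkin--Weth scheme \cite{szulkin-weth09}.
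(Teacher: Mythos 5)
Your proof is correct and follows essentially the same route as the paper: the paper structures the argument as a two-case dichotomy on whether $\int u_n\mK(u_n)\,dx\to 0$, while you argue by contradiction, but the key ingredients are identical — the weighted monotonicity inequality for $t\mapsto |t|^{p'-2}t$, the (reverse) Hölder estimate in the weight $(|v_n|+|w_n|)^{p'-2}$, the nonvanishing theorem applied to $Q^{1/p}(v_n-w_n)$, $\Z^N$-translation with Lemma~\ref{lem:PS_sequences}, and weak lower semicontinuity to invoke $\kappa$. The minor additional care you take in applying nonvanishing to $\tilde u_n=Q^{1/p}u_n$ rather than to $u_n$ directly (and in tracking the factor $Q^{p'/p}$ through the concentration bound) makes the argument slightly more explicit than the paper's, which silently passes from concentration of $Q^{1/p}(v_n-w_n)$ to concentration of $v_n-w_n$ via $\|Q\|_\infty<\infty$.
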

\begin{proof}
First note that the sequences $(v_n)_n, (w_n)_n$ are bounded.
We distinguish two cases.

{\bf Case 1}: If $\int_{\R^N}(v_n-w_n)\mK (v_n-w_n)\, dx\to 0$, as $n\to\infty$, it follows that
\begin{align}
\int_{\R^N}(|v_n|^{p'-2}v_n-&|w_n|^{p'-2}w_n)(v_n-w_n)\, dx\label{eqn:difference}\\
& =(J'(v_n)-J'(w_n))(v_n-w_n)+\int_{\R^N}(v_n-w_n)\mK (v_n-w_n)\, dx\to 0,\nonumber
\end{align}
as $n\to\infty$. Moreover, since $1<p'<2$, there holds for every $a, b>0$:
\begin{equation*}
(a^{p'-1}-b^{p'-1})(a-b)=(p'-1)(a-b)\int_b^a t^{p'-2}\, dt \geq (p'-1)(a-b)^2(a+b)^{p'-2}.
\end{equation*}
Using the Reverse H\"older inequality (see \cite[Theorem 2.12]{adams}) we obtain
\begin{align*}
\int_{\R^N}(|v_n|^{p'-2}v_n&-|w_n|^{p'-2}w_n)(v_n-w_n)\, dx
\geq (p'-1)\int_{\R^N}(v_n-w_n)^2(|v_n|+|w_n|)^{p'-2}\, dx\\
&\geq (p'-1) \left(\int_{\R^N}|v_n-w_n|^{p'}\, dx\right)^{\frac{2}{p'}}\left(\int_{\R^N}(|v_n|+|w_n|)^{p'}\, dx\right)^{1-\frac{2}{p'}}.
\end{align*}
Using \eqref{eqn:difference} and the boundedness of $(v_n)_n$ and $(w_n)_n$, we deduce that $\|v_n-w_n\|_{p'}\to 0$
as $n\to\infty$.

{\bf Case 2}: If $\limsup\limits_{n\to\infty}\left|\int_{\R^N}(v_n-w_n)\mK (v_n-w_n)\, dx\right|>0$, 
the nonvanishing property (see Theorem~\ref{thm:nonvanishing}) gives the existence of 
$R, \zeta>0$ and a sequence $(x_n)_n\subset\R^N$ such that (up to a subsequence)
\begin{equation}\label{eqn:mass_concentr}
\int_{B_R(x_n)}|v_n-w_n|^{p'}\, dx\geq \zeta\quad\text{for all }n.
\end{equation}
We may even assume (making $R$ larger if necessary) that $x_n\in\Z^N$ for all $n$. 
Setting $\tilde{v}_n=v_n(\cdot+x_n)$
and $\tilde{w}_n=w_n(\cdot+x_n)$ for each $n$, we find, using the $\Z^N$-translation 
invariance of $J$, that $(\tilde{v}_n)_n$ and 
$(\tilde{w}_n)_n$ are bounded Palais-Smale sequences for $J$. From Lemma~\ref{lem:PS_sequences}, 
there exist $\tilde{v}, \tilde{w}\in\cK$ satisfying
$\tilde{v}_n\weakto \tilde{v}$ and $\tilde{w}_n\weakto \tilde{w}$ weakly in $L^{p'}(\R^N)$
as well as $1_{B_R}\tilde{v}_n\to1_{B_R}\tilde{v}$ and $1_{B_R}\tilde{w}_n\to1_{B_R}\tilde{w}$ strongly in $L^{p'}(\R^N)$. 
The property \eqref{eqn:mass_concentr} then implies
$\tilde{v}\neq\tilde{w}$, and we conclude that
$$
\limsup_{n\to\infty}\|v_n-w_n\|_{p'}\geq\liminf_{n\to\infty}\|\tilde{v}_n-\tilde{w}_n\|_{p'}
\geq \|\tilde{v}-\tilde{w}\|_{p'}\geq\kappa.
$$
\end{proof}

Let $H$: $L^{p'}(\R^N)\backslash\cK$ $\to$ $L^{p'}(\R^N)$ be a locally Lipschitz continuous and $\Z_2$-equivariant pseudo-gradient 
vector field for $J$, i.e., 
$$\|H(v)\|_{p'}<2\min\{\|J'(v)\|,1\}\quad\text{ and }\quad J'(v)H(v)>\min\{\|J'(v)\|,1\}\|J'(v)\|,\quad v\in L^p(\R^N)$$
(see~\cite[Definition II.3.1]{struwe}). We consider for $v\in L^{p'}(\R^N)\backslash\cK$ the flow $\eta$ given by
\begin{equation}
\left\{\begin{array}{rcl} \frac{\partial}{\partial t}\eta(t,v)&=&-H(\eta(t,v)),\\ \eta(0,v)&=&v, \end{array}\right.
\end{equation}
and denote by $(T^-(v),T^+(v))$ the maximal existence time interval for the 
trajectory $t\mapsto \eta(t,v)$.
\begin{lemma}\label{lem:max_exist_time}
For every $v\in L^{p'}(\R^N)\backslash\cK$, the following alternative holds: 
either $\lim\limits_{t\to T^+(v)}\eta(t,v)$ exists and is a critical point of $J$, 
or $\lim\limits_{t\to T^+(v)}J(\eta(t,v))=-\infty$. In the latter case, $T^+(v)=\infty$.
\end{lemma}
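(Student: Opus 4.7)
The plan is to split into three scenarios and handle each in turn. First observe that, along any trajectory remaining in $L^{p'}(\R^N)\setminus\cK$, the pseudo-gradient inequalities give
$$
\frac{d}{dt}J(\eta(t,v)) \;=\; -J'(\eta(t,v))H(\eta(t,v)) \;\leq\; -\min\{\|J'(\eta(t,v))\|,1\}\,\|J'(\eta(t,v))\| \;<\;0,
$$
so $t\mapsto J(\eta(t,v))$ is strictly decreasing and admits a limit $L\in[-\infty,J(v))$ as $t\to T^+(v)$. The pointwise bound $\|H(\cdot)\|_{p'}<2$ also gives the Lipschitz estimate $\|\eta(t,v)-\eta(s,v)\|_{p'}\leq 2|t-s|$ on $[0,T^+(v))$.

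If $T^+(v)<\infty$, the Lipschitz bound makes $\eta(\cdot,v)$ Cauchy as $t\to T^+(v)$ and so it admits a limit $v^*\in L^{p'}(\R^N)$. If $v^*\notin\cK$, the local Lipschitz character of $H$ near $v^*$ and the Picard-Lindel\"of theorem would allow me to continue the trajectory past $T^+(v)$, contradicting maximality; hence $v^*\in\cK$ and the first alternative holds (with $L=J(v^*)\in\R$). If $T^+(v)=\infty$ and $L=-\infty$, the second alternative holds directly. It remains to treat $T^+(v)=\infty$ with $L>-\infty$.

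In this last case, integrating the energy inequality yields
$$
\int_0^\infty\min\{\|J'(\eta(\tau,v))\|,1\}\,\|J'(\eta(\tau,v))\|\,d\tau \;\leq\; J(v)-L \;<\;\infty,
$$
so by a Chebyshev-type estimate, for every $\delta>0$ the Lebesgue measure of $\{t\geq T:\|J'(\eta(t,v))\|\geq\delta\}$ tends to $0$ as $T\to\infty$. In particular, any given sequence $T_n\to\infty$ can be perturbed by amounts going to zero to obtain $t_n\to\infty$ with $\|J'(\eta(t_n,v))\|\to 0$, i.e., $(\eta(t_n,v))_n$ is a Palais-Smale sequence for $J$.

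To finish, I argue by contradiction: if $\eta(\cdot,v)$ is not Cauchy in $L^{p'}(\R^N)$ at infinity, then I can pick $\varepsilon\in(0,\kappa)$ and sequences $t_n<s_n$, $t_n\to\infty$, with $\|\eta(t_n,v)-\eta(s_n,v)\|_{p'}=\varepsilon$ (applying the intermediate value theorem to the continuous map $t\mapsto\|\eta(t,v)-\eta(t_n,v)\|_{p'}$). Combining the measure estimate above with the Lipschitz bound $\|\eta(t,v)-\eta(t',v)\|_{p'}\leq 2|t-t'|$, I perturb both $t_n$ and $s_n$ in time by $o(1)$ so as to turn $(\eta(t_n,v))_n$ and $(\eta(s_n,v))_n$ into Palais-Smale sequences while keeping their mutual distance arbitrarily close to $\varepsilon$, in particular strictly between $0$ and $\kappa$; this contradicts Lemma~\ref{lem:dist_PS_sequ}. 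Therefore the orbit converges in $L^{p'}(\R^N)$, and Lemma~\ref{lem:PS_sequences} (applied to the Palais-Smale subsequence along the flow) identifies the limit as an element of $\cK$. The main technical obstacle is precisely this last perturbation step — reconciling the prescribed-distance constraint with the small-$\|J'\|$ constraint — and it is exactly here that the Lipschitz speed bound on the flow interacts with the Chebyshev measure estimate.
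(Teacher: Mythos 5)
Your proposal is correct and follows the same global architecture as the paper's proof: strict energy decrease and the Lipschitz bound $\|\eta(t,v)-\eta(s,v)\|_{p'}\le 2|t-s|$ from the pseudo-gradient bounds, the Cauchy argument when $T^+(v)<\infty$, the trivial second alternative when the energy drops to $-\infty$, and a contradiction with Lemma~\ref{lem:dist_PS_sequ} when $T^+(v)=\infty$ with $L>-\infty$. Where you differ from the paper is in how you manufacture the two Palais--Smale sequences at a controlled mutual distance. The paper argues locally inside each interval $[t_n,t_{n+1}]$: it picks the smallest $t_n^1$ at which $\|\eta(t_n,v)-\eta(t_n^1,v)\|_{p'}=\eps/3$, bounds the traversal time from below by $\eps/6$, compares with the energy drop to force $\min_{[t_n,t_n^1]}\|J'(\eta(\cdot,v))\|\to 0$, and symmetrically near $t_{n+1}$, which traps the distance between the two PS sequences in $[\eps/3,2\eps]$. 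You instead integrate the energy inequality globally, deduce via Chebyshev that $|\{t\ge T: \|J'(\eta(t,v))\|\ge\delta\}|\to 0$ as $T\to\infty$ for each $\delta>0$, and then perturb any pair of times by $o(1)$ to land in the small-$\|J'\|$ set while moving the orbit points by $o(1)$. Both routes work and are of comparable difficulty; yours has the conceptual advantage of reusing exactly the same finite-integral estimate that also identifies the terminal limit as a critical point (which the paper invokes only at the very end, via $\int_0^\infty\min\{\|J'(\eta(s,v))\|^2,1\}\,ds<\infty$), whereas the paper's interval argument produces sharper quantitative control of where the PS times live. One thing you should spell out rather than just flag: the perturbation step does require a diagonalization in $\delta$ — choose $T^{(k)}$ with $|\{t\ge T^{(k)}:\|J'(\eta(t,v))\|\ge 1/k\}|<1/k$ and, for each $n$, take $k=k(n)$ the largest index with $T^{(k)}\le t_n$; then some $t_n'\in[t_n,t_n+1/k(n)]$ has $\|J'(\eta(t_n',v))\|<1/k(n)$, and likewise for $s_n$. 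With that made explicit, the distance $\|\eta(t_n',v)-\eta(s_n',v)\|_{p'}=\eps+o(1)$ stays strictly between $0$ and $\kappa$, and the contradiction with Lemma~\ref{lem:dist_PS_sequ} is clean.
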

\begin{proof}
Let $v\in L^{p'}(\R^N)$ and let us first remark that $J$ is bounded on bounded sets and that it is strictly decreasing 
along trajectories of $\eta$. 
In particular, if $t\mapsto J(\eta(t,v))$ becomes unbounded as $t\to T^+(v)$, then
$\|\eta(t,v)\|_{p'}\to \infty$ as $t\to T^+(v)$ and the boundedness of $H$ implies $T^+(v)=\infty$ and 
$\lim\limits_{t\to T^+(v)}J(\eta(t,v))=-\infty$. 

Let us now assume that $\ell:=\inf\{J(\eta(t,v))\, :\, 0\leq t<T^+(v)\}>-\infty$, and observe that
$\ell=\lim\limits_{t\to T^+(\eta)}J(\eta(t,v))$ holds, since $J$ is strictly decreasing along the flow.
If $T^+(v)<\infty$, we can write for $0\leq s<t<T^+(v)$:
\begin{align*}
\|\eta(t,v)-\eta(s,v)\|_{p'}&\leq \int_s^t\|H(\eta(\tau,v))\|_{p'}\, d\tau < 2(t-s).
\end{align*}
Hence the limit $\lim\limits_{t\to T^+(v)}\eta(t,v)$ exists and is a critical point, since otherwise 
the trajectory could be continued beyond $T^+(v)$.
It remains to consider the case where $T^+(v)=\infty$. In order to prove the existence of 
$\lim\limits_{t\to\infty}\eta(t,v)$ in this case, we show 
that for every $\eps>0$ there exists $t_\eps>0$ such that 
$\|\eta(t_\eps,v)-\eta(t,v)\|_{p'}<\eps$ for all $t\geq t_\eps$. 
Assume by contradiction that this property does not hold. Then for some
$0<\eps<\frac{\kappa}{2}$ there is an increasing sequence 
$(t_n)_n\subset(0,\infty)$ satisfying $\lim\limits_{n\to\infty}t_n=\infty$ and 
$\|\eta(t_n,v)-\eta(t_{n+1},v)\|_{p'}=\eps$ for all $n$. 
Choosing the smallest $t_n^1\in(t_n,t_{n+1})$ such that 
$\|\eta(t_n,v)-\eta(t_n^1,v)\|_{p'}=\frac{\eps}{3}$ and setting 
$\kappa_n:=\min\limits_{s\in[t_n,t_n^1]}\|J'(\eta(s,v))\|$, 
we obtain from the properties of the pseudo-gradient field that
\begin{align*}
\frac{\eps}{3}&=\|\eta(t_n,v)-\eta(t_n^1,v)\|_{p'}\leq \int_{t_n}^{t_n^1}\| H(\eta(s,v))\|_{p'}\, ds\\
&\leq \frac{2}{\kappa_n}\int_{t_n}^{t_n^1}J'(\eta(s,v))H(\eta(s,v))\, ds
= \frac{2}{\kappa_n}\left(J(\eta(t_n,v))-J(\eta(t_n^1,v))\right).
\end{align*}
Since $J(\eta(t_n,v))-J(\eta(t_n^1,v))\to \ell-\ell=0$ as $n\to\infty$, 
we obtain $\lim\limits_{n\to\infty}\kappa_n=0$. Therefore,
we find $s_n^1\in[t_n,t_n^1]$ such that $J'(\eta(s_n^1,v))\to 0$ as $n\to\infty$. 
Choosing the largest $t_n^2\in(t_n^1,t_{n+1})$ for which
$\|\eta(t_n^2,v)-\eta(t_{n+1},v)\|_{p'}=\frac{\eps}{3}$, we find similarly some 
$s_n^2\in [t_n^2,t_{n+1}]$ such that $J'(\eta(s_n^2,v))\to 0$
as $n\to\infty$. Hence, $(\eta(s_n^1,v))_n$ and $(\eta(s_n^2, v))_n$ are 
Palais-Smale sequences for $J$ and satisfy for all $n$:
$$
\frac{\eps}{3}\leq \|\eta(s_n^1,v)-\eta(s_n^2,v)\|_{p'}\leq 2\eps<\kappa,
$$
which contradicts Lemma~\ref{lem:dist_PS_sequ}. 
Thus, $v^\ast:=\lim\limits_{t\to\infty}\eta(t,v)$ exists and the estimate
$$
\int_0^\infty \min\{\|J'(\eta(s,v))\|^2,1\} \, ds\leq J(v)-J(v^\ast)
$$ 
implies that it is a critical point.
\end{proof}

We note that, since $\cA$ is finite, there is for each fixed $d>0$ some $\eps_0>0$ such that
$J^{d+2\eps_0}_{d-2\eps_0}\cap\cK=\cK_d$. Here and in the sequel, we use the following notation for
super- and sublevel sets of $J$: for $a,b\in \R$, we let
$J_a:=\{u\in L^{p'}(\R^N)\, :\, J(u)\geq a\}$, $J^b:=\{u\in L^{p'}(\R^N)\, :\, J(u)\leq b\}$ and $J_a^b:=J_a\cap J^b$.
In addition, we set for $d\in\R$, $\cK_d:=\{u\in\cK\, :\, J(u)=d\}$.
For given $\delta>0$ we shall denote, in the following, the open $\delta$-neighborhood of a set $B\subset L^{p'}(\R^N)$ by
$U_\delta(B)$. 

\begin{lemma}\label{lem:eps_0}
Let $d>0$ and consider $\eps_0>0$ such that $J^{d+2\eps_0}_{d-2\eps_0}\cap\cK=\cK_d$. 
For every $\delta>0$, there exists $0<\eps<\eps_0$
and a continuous $1$-parameter family of odd homeomorphisms of $L^{p'}(\R^N)$, 
$\{\tilde{\eta}(t,\cdot)\}_{t\in\R}$, such that
\begin{itemize}
 \item[(i)] $t\mapsto J(\tilde{\eta}(t,v))$ is nonincreasing, $\forall$ $v\in L^{p'}(\R^N)$;
 \item[(ii)] For every $v\in J^{d+\eps}\backslash U_\delta(\cK_d)$, there exists $T\geq 0$ for which
 $J(\tilde{\eta}(T,v))\leq d-\eps$.
\end{itemize}
\end{lemma}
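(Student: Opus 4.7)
The statement is a standard deformation lemma; modulo a cut-off that handles the (a priori noncompact) set $\cK_d$, everything reduces to the quantitative lower bound
\[
\|J'(v)\|\geq\sigma\qquad\text{for all }v\in J^{d+\eps_1}_{d-\eps_1}\setminus U_{\delta/2}(\cK_d),
\]
for some $\eps_1\in(0,\eps_0]$ and $\sigma>0$. Establishing this bound is the main obstacle, because Palais-Smale sequences are not precompact and concentration at infinity has to be ruled out.

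I would argue the lower bound by contradiction. A failure produces a Palais-Smale sequence $(v_n)$ at level $d$ with $\mathrm{dist}(v_n,\cK_d)\geq\delta/2$. Lemma~\ref{lem:PS_sequences} provides a weak limit $v^\ast\in\cK$ with $J(v^\ast)\leq d$. Either $v_n\to v^\ast$ strongly in $L^{p'}$ (and then the critical-point identity $J(w)=(\tfrac{1}{p'}-\tfrac{1}{2})\|w\|_{p'}^{p'}$ forces $v^\ast\in\cK_d$, contradicting the distance hypothesis), or $v_n-v^\ast$ does not vanish in the sense $\int(v_n-v^\ast)\mK(v_n-v^\ast)\,dx\not\to 0$. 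In the second case, Theorem~\ref{thm:nonvanishing} yields lattice translations $x_n\in\Z^N$, $|x_n|\to\infty$, along which $v_n(\cdot+x_n)$ converges weakly to a nonzero $w_1\in\cK$. Iterating this peeling, and using that Lemma~\ref{lem:MP}(i) bounds every nontrivial critical level below by some $\alpha>0$, I obtain a finite profile decomposition $v_n=v^\ast+\sum_{i=1}^k w_i(\cdot-x_n^i)+o(1)$ in $L^{p'}$ with $d=J(v^\ast)+\sum_i J(w_i)$. Since $\cA$ is finite the set of critical values of $J$ is discrete, and the choice of $\eps_0$ isolates $d$ on $[d-2\eps_0,d+2\eps_0]$; using $J\geq 0$ on $\cK$, a short case analysis forces $v^\ast=0$, $k=1$, $w_1\in\cK_d$, whence $v_n-w_1(\cdot-x_n^1)\to 0$, again contradicting $\mathrm{dist}(v_n,\cK_d)\geq\delta/2$.

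Once the lower bound is in hand, the construction of $\tilde\eta$ is routine. Pick a locally Lipschitz even function $\chi\colon L^{p'}(\R^N)\to[0,1]$ with $\chi\equiv 0$ on $U_{\delta/4}(\cK_d)$ and $\chi\equiv 1$ outside $U_{\delta/2}(\cK_d)$, and set $\tilde H:=\chi H$ (extended by $0$ on $\cK_d$; an additional $J$-cut-off suppresses the other, discrete critical levels of $J$). Then $\tilde H$ is globally locally Lipschitz, odd, and uniformly bounded by $2$, so the ODE $\partial_t\tilde\eta=-\tilde H(\tilde\eta)$, $\tilde\eta(0,\cdot)=\mathrm{id}$, generates a one-parameter family of odd homeomorphisms $\tilde\eta(t,\cdot)$ of $L^{p'}(\R^N)$ for every $t\in\R$, with $t\mapsto J(\tilde\eta(t,v))$ nonincreasing because $\frac{d}{dt}J(\tilde\eta)=-\chi\,J'(\tilde\eta)H(\tilde\eta)\leq 0$. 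This yields (i).

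For (ii) I would choose $\eps\in(0,\eps_1)$ with $16\eps<\delta\sigma\min\{\sigma,1\}$ and, for $v\in J^{d+\eps}\setminus U_\delta(\cK_d)$, set $T:=4\eps/(\sigma\min\{\sigma,1\})$. If the trajectory enters $J^{d-\eps}$ before time $T$ we are done. Otherwise it stays in $J^{d+\eps}_{d-\eps}$; since $\|\tilde H\|_{p'}<2$ and it starts at distance $\geq\delta$ from $\cK_d$, it needs time at least $\delta/4$ to reach $U_{\delta/2}(\cK_d)$, and on the portion outside $U_{\delta/2}(\cK_d)$ one has $\chi=1$ and $\frac{d}{dt}J(\tilde\eta)\leq-\sigma\min\{\sigma,1\}$ by the pseudo-gradient inequalities. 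Integrating on that travel interval makes $J$ drop by strictly more than $2\eps$, contradicting the confinement to $J_{d-\eps}$ and establishing (ii).
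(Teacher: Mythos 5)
Your proposal replaces the heart of the paper's argument with a claimed quantitative lower bound
\[
\|J'(v)\|\geq\sigma\qquad\text{for all }v\in J^{d+\eps_1}_{d-\eps_1}\setminus U_{\delta/2}(\cK_d),
\]
but this bound is too strong and in general false. The paper only proves
\[
\tau:=\inf\bigl\{\|J'(v)\|\, :\, v\in J^{d+\eps_0}_{d-\eps_0}\cap (U_\delta(\cK_d)\setminus U_{\delta/2}(\cK_d))\bigr\}>0,
\]
i.e. a bound on the \emph{annulus} around $\cK_d$, and this restriction is essential. The obstruction you need to rule out is exactly the multi-bump Palais--Smale sequence $v_n=w_1(\cdot-y_n^1)+w_2(\cdot-y_n^2)+o(1)$ with $|y_n^1-y_n^2|\to\infty$ and $J(w_1)+J(w_2)=d$: such a sequence has $J'(v_n)\to 0$ but stays bounded away from $\cK_d$ (the elements of $\cK_d$ are ``single bumps''), so $\|J'\|$ does \emph{not} stay bounded away from zero outside $U_{\delta/2}(\cK_d)$. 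Your ``short case analysis'' does not exclude this: the finiteness of $\cA$ and the choice $J^{d+2\eps_0}_{d-2\eps_0}\cap\cK=\cK_d$ only isolate the critical value $d$ inside $[d-2\eps_0,d+2\eps_0]$; they say nothing about the possibility $d=J(w_1)+J(w_2)$ with both $J(w_i)<d-2\eps_0$. The fact that every nontrivial critical level is $\geq\alpha$ also does not force $k=1$ (it only bounds $k\leq d/\alpha$). So the peeling/profile-decomposition route, quite apart from the fact that the paper never establishes such a decomposition, cannot conclude.

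The paper's proof sidesteps this precisely by combining the annulus bound with Lemma~\ref{lem:max_exist_time}. The key mechanism is: if a trajectory of $\eta$ remains in $J_{d-\eps}$ (so $J$ is bounded below along it), Lemma~\ref{lem:max_exist_time} says it must converge to a critical point, which by the choice of $\eps_0$ lies in $\cK_d$; hence it eventually enters $U_{\delta/2}(\cK_d)$. Starting outside $U_\delta(\cK_d)$, the trajectory must then traverse the annulus $U_\delta(\{w\})\setminus U_{\delta/2}(\{w\})$ for some $w\in\cK_d$, and on that crossing one has $\|J'\|\geq\tau$ and at least time $\delta/4$, so $J$ drops by more than $\tau^2\delta/4>2\eps$. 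This gives property (ii). Your construction of $\tilde\eta$ via a cut-off and your estimate of the travel time across the annulus are essentially the paper's, but without Lemma~\ref{lem:max_exist_time} you have no mechanism to force the trajectory either to enter the annulus or to leave $J_{d-\eps}$, and with the false global lower bound removed the proof does not close.

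Also, a secondary issue: where the paper proves $\tau>0$ it does not invoke any concentration-compactness decomposition at all; it simply applies Lemma~\ref{lem:dist_PS_sequ} (the dichotomy for pairs of Palais--Smale sequences) to a sequence trapped in the annulus $U_\delta(\{w_0\})\setminus U_{\delta/2}(\{w_0\})$, together with the constant Palais--Smale sequence $w_0\in\cK_d$. That step is much more economical than a peeling argument and, crucially, it is only asked to work at a distance between $\delta/2$ and $\delta<\kappa$ from a single critical point, which is exactly what Lemma~\ref{lem:dist_PS_sequ} rules out.
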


\begin{proof}
Taking without loss of generality $\delta<\kappa$, we first claim that
$$
\tau:=\inf\bigl\{\|J'(v)\|\, :\, v\in J^{d+\eps_0}_{d-\eps_0}\cap 
(U_\delta(\cK_d)\backslash U_{\frac{\delta}{2}}(\cK_d))\bigr\}>0.
$$
Assuming by contradiction that $\tau=0$ holds, we can find a sequence 
$(v_n)_n\subset U_\delta(\cK_d)\backslash U_{\frac{\delta}{2}}(\cK_d)$ 
such that $J(v_n)\in [d-2\eps_0,d+2\eps_0]$ for all $n$
and $J'(v_n)\to 0$ as $n\to\infty$. 
Using the $\Z^N$-invariance of $J$ and the finiteness of $\cA$, we find that (up to a subsequence) 
$(v_n)_n\subset U_\delta(\{w_0\})\backslash U_{\frac{\delta}{2}}(\{w_0\})$ for some $w_0\in \cK_d$. This implies that
$$
\frac{\delta}{2}\leq\limsup_{n\to\infty}\|v_n-w_0\|_{p'}\leq \delta<\kappa,
$$
which contradicts Lemma~\ref{lem:dist_PS_sequ}. Hence, $\tau>0$ must hold. 

\medskip

Without loss of generality we may assume $\tau<1$. Choosing 
$0<\eps<\min\bigl\{\eps_0,\frac{\delta\tau^2}{4}\bigr\}$,
we claim that the following properties hold for every $v\in J^{d+\eps}\backslash U_\delta(\cK_d)$:
\begin{eqnarray}
 \lim\limits_{t\to T^+(v)}J(\eta(t,v))<d-\eps \label{eqn:limit_below}\\
 \eta(t,v)\notin U_{\frac{\delta}{2}}(\cK_d)\cap J_{d-\eps}\quad\text{ for all }0\leq t<T^+(v).\label{eqn:eta_neighb}
\end{eqnarray}
Assuming by contradiction that at least one of the above properties does not hold, 
we find by Lemma~\ref{lem:max_exist_time} and the fact that $J^{d+2\eps_0}_{d-2\eps_0}\cap \cK=\cK_d$, some
$v\in J^{d+\eps}\backslash U_\delta(\cK_d)$ and some $0<t_0<T^+(v)$ such that
$\eta(t_0,v)\in U_{\frac{\delta}{2}}(\cK_d)\cap J_{d-\eps}$. Moreover, since $\delta<\kappa$,
there is, according to Lemma \ref{lem:dist_crit_pts}, $w\in \cK_d$ such that 
$\eta(t_0,v)\in U_{\frac{\delta}{2}}(\{w\})\cap J_{d-\eps}$.
Setting 
\begin{align*}
t_1:=\sup\{t\in[0,t_0)\, :\, \eta(t,v)\notin U_\delta(\{w\})\} \text{ and }
t_2:=\inf\{t\in(t_1,T^+(v))\, :\, \eta(t,v)\in U_\frac{\delta}{2}(\{w\})\}
\end{align*}
we obtain $t_2\leq t_0$ and
$$
\frac{\delta}{2}\leq\|\eta(t_1,v)-\eta(t_2,v)\|_{p'}\leq \int_{t_1}^{t_2}\|H(\eta(s,v))\|_{p'}\, ds\leq 2(t_2-t_1).
$$
Consequently,
\begin{align*}
J(\eta(t_0,v))&\leq J(\eta(t_2,v))=J(\eta(t_1,v))-\int_{t_1}^{t_2}J'(\eta(s,v))H(\eta(s,v))\, ds\\
&\leq (d+\eps)-\tau^2(t_2-t_1)<d-\eps,
\end{align*}
contradicting the fact that $\eta(t_0,v)\in J_{d-\eps}$. This contradiction proves \eqref{eqn:limit_below} and
\eqref{eqn:eta_neighb}.

\medskip

Let us now choose a locally Lipschitz continuous function $\chi$: $L^{p'}(\R^N)$ $\to$ $[0,1]$ 
such that $\chi(-v)=\chi(v)$ for all $v$, $\chi=1$ on $J^{d+\eps}_{d-\eps}\backslash U_{\frac{\delta}{2}}(\cK_d)$ 
and $\chi=0$ on $J_{d+2\eps_0}\cup J^{d-2\eps_0}\cup \left(\overline{U_{\frac{\delta}{4}}(\cK_d)}\cap J_{d-2\eps_0}\right)$.
For every $v\in L^{p'}(\R^N)$, the Cauchy problem
\begin{equation}
\left\{\begin{array}{rcl} \frac{\partial}{\partial t}\tilde{\eta}(t,v)&=&-\chi(\tilde{\eta}(t,v)H(\tilde{\eta}(t,v)),\\ 
\tilde{\eta}(0,v)&=&v, \end{array}\right.
\end{equation}
has a unique solution $\tilde{\eta}(\cdot,v)$ defined on $\R$, and the flow $\tilde{\eta}$ is continuous on 
$\R\times L^{p'}(\R^N)$. By the semigroup property and the fact that $\chi H$ is odd, 
$\tilde{\eta}(t,\cdot)$ is an odd homeomorphism for every $t\in\R$. Also,
since $\chi\geq 0$ we obtain from the properties of the pseudogradient field $H$, 
that $J$ is nonincreasing along the trajectories of $\tilde{\eta}$.

Let now $v\in J^{d+\eps}\backslash U_\delta(\cK_d)$ be chosen. 
If $J(v)<d-\eps$, then (ii) holds with $T=0$.
Otherwise, $v\in J^{d+\eps}_{d-\eps}\backslash U_\delta(\cK_d)$ and by \eqref{eqn:eta_neighb}, we obtain
$\chi(\eta(t,v))=1$ for all $0\leq t<T^+(v)$ such that $J(\eta(t,v))\geq d-\eps$. The uniqueness of the 
flow therefore implies $\tilde{\eta}(t,v)=\eta(t,v)$ for all such $t$, and using \eqref{eqn:limit_below},
we find $0\leq T<T^+(v)$ for which $J(\tilde{\eta}(T,v))=J(\eta(T,v))=d-\eps$. This shows 
that $\tilde{\eta}$ has the properties (i) and (ii) and concludes the proof.
\end{proof}

In order to obtain a contradiction to the assumption that $\cA$ is finite, 
we will need the following variant of Benci's pseudoindex (see \cite{bartolo-benci-fortunato83, benci82}).
Let $\Sigma$ denote the family of all compact and symmetric subsets of $L^{p'}(\R^N)$,
and consider $\rho>0$ as given by Lemma \ref{lem:MP}(i), i.e., small enough that for some $\alpha>0$, 
$J(v)\geq \alpha$ for all $v\in S_\rho(0)=\{v\in L^{p'}(\R^N)\, :\, \|v\|_{p'}=\rho\}$.
For $A\in\Sigma$, set
$$
i^\ast(A):=\min\bigl\{\gamma(h(A)\cap S_\rho(0))\, :\, h\in\cH\bigr\},
$$
where $\gamma$ denotes the Krasnoselskii genus and
$\cH:=\bigl\{h: L^{p'}(\R^N)\to L^{p'}(\R^N)$ odd homeomorphism with
$J(h(v))\leq J(v)\text{ for all }v\in L^{p'}(\R^N)\bigr\}$. 

We note that according to \cite[Proposition 1.6]{benci82}, $i^\ast$ is a pseudoindex 
in the sense of \cite[Definition 1.2]{benci82}. Moreover, using Lemma \ref{lem:MP},
we find that sets of arbitrarily large pseudoindex exist. Indeed, arguing as in 
\cite[Lemma 2.16]{squassina-szulkin} (iv), we find that for every $m$, 
$i^\ast(\cW_m\cap \overline{B_R}(0))\geq m$,
where $\cW_m$ and $R=R(\cW_m)$ are as in Lemma~\ref{lem:MP}(ii).

We can now give the proof of the multiplicity result.
\begin{proof}[Proof of Theorem~\ref{thm:infinite_periodic}]
For $k\in\N$, let
$$
d_k:=\inf_{\substack{A\in\Sigma\\ i^\ast(A)\geq k}}\sup J(A).
$$
Since there exist sets of arbitrarily large pseudoindex, $d_k$ is well-defined for all $k$. 
Moreover our choice of
$\rho$ gives $d_k\geq \alpha$, $\forall$ $k\geq 1$. 
We shall prove that for every $k$,
\begin{equation}\label{eqn:claim0}
\cK_{d_k}\neq\varnothing\quad\text{and}\quad d_k<d_{k+1}.
\end{equation}

Let $k\in\N$ and consider $d=d_k$. First remark that by Lemma~\ref{lem:dist_crit_pts}, 
$\cK$ is a countable discrete set and therefore $\gamma(\cK_d)=0$ or $1$. 
Taking $\delta>0$ and $U:=U_\delta(\cK_d)$ such that $\gamma(\overline{U})=\gamma(\cK_d)$, 
we consider corresponding $\eps>0$ and $\{\tilde{\eta}(t,\cdot)\}_{t\geq 0}$ given by Lemma \ref{lem:eps_0}.

Let $A\in\Sigma$ be chosen in such a way that $i^\ast(A)\geq k$ and $\sup J(A)\leq d+\eps$. 
Since $A$ is compact, the property (ii) of the family 
$\{\tilde{\eta}(t,\cdot)\}_{t\geq 0}$, implies
the existence of some common $T\geq 0$ such that 
$J(\widetilde{\eta}(T,v))\leq d-\eps$ for all $v\in A\backslash U$. 
From the properties of the pseudoindex 
(see \cite[Definition 1.2]{benci82}) and since $\tilde{\eta}(T,\cdot)\in\cH$, we obtain 
$$
k\leq i^\ast(A)\leq i^\ast(A\backslash U)+\gamma(\overline{U})
\leq i^\ast\bigl(\widetilde{\eta}(T,A\backslash U)\bigr)+\gamma(\cK_d)
\leq k-1+\gamma(\cK_d).
$$
Hence $\gamma(\cK_d)\geq 1$, and therefore $\cK_d\neq\varnothing$. 
Moreover, if $d_k=d_{k+1}$ would hold, then we could choose $A$ such that 
$i^\ast(A)\geq k+1$ in the above argument and this would give $\gamma(\cK_d)\geq 2$, 
contradicting the fact that $\gamma(\cK)\leq 1$. Consequently, $d_k<d_{k+1}$ holds for all $k$ 
and \eqref{eqn:claim0} is proved. But this gives the existence of infinitely 
many distinct critical levels, contradicting the assumption that $\cA$ is finite.

Hence, the functional $J$ has infinitely many geometrically distinct critical points, and by
\cite[Lemma 4.3]{evequoz-weth-dual} and \cite[Lemma 2.4]{e-helm-2d}, 
these give rise to geometrically distinct strong solutions
of \eqref{eq:33b}. This concludes the proof.
\end{proof}

\section{The asymptotically periodic problem}\label{sec:asympt_periodic}
In this section, we give sufficient conditions for the existence of a (nontrivial) 
solution to \eqref{eq:33b} in the case where 
\begin{equation}\label{eqn:asympt_per}
\lim_{R\to\infty}\operatorname*{ess\ sup}_{|x|\geq R}|Q(x)-Q_\infty(x)|=0,
\end{equation}
for some bounded, nonnegative and $\Z^N$-periodic function $Q_\infty\not\equiv 0$.

In order to show the existence of a critical point for $J$, we shall compare its behavior with that of the limit energy functional:
\begin{equation*}
J_\infty(v)=\frac1{p'}\int_{\R^N}|v|^{p'}\, dx - \frac12 \int_{\R^N}Q_\infty^\frac1p v\mR(Q_\infty^\frac1p v)\, dx, \quad v\in L^{p'}(\R^N).
\end{equation*}
Considering the minimax levels
$$
c=\inf\limits_{v\neq 0}\ \sup\limits_{t>0}J(tv)\quad\text{ and }\quad
c_\infty=\inf\limits_{v\neq 0}\ \sup\limits_{t>0}J_\infty(tv),
$$
we first remark that $0<c, c_\infty<\infty$, as follows from Lemma~\ref{lem:MP}. 
In addition, we notice that for each $v\in U^+:=\left\{ u\in L^{p'}(\R^N)\, :\, \int_{\R^N}Q^\frac1pu\mR(Q^\frac1pu)\, dx>0\right\}$
there is a unique $t=t_v>0$, given by
\begin{equation}\label{eqn:t_v}
t_v^{2-p'}=\frac{\int_{\R^N}|v|^{p'}\, dx}{\int_{\R^N}Q^\frac1pv\mR(Q^\frac1pv)\, dx},
\end{equation}
for which $J(t_vv)>J(sv)$ holds for all $s>0$, $s\neq t_v$. We can therefore write
\begin{equation}\label{eqn:c_inf_fiber}
c=\inf\limits_{v\in U^+}J(t_vv)=\inf\limits_{v\in U^+}\left(\frac1{p'}-\frac12\right)t_v^{p'}\int_{\R^N}|v|^{p'}\, dx,
\end{equation}
since $\sup\limits_{t>0}J(tv)=\infty$ for $v\notin U^+\cup\{0\}$. Similarly,
\begin{equation}
c_\infty=\inf\limits_{v\in U_\infty^+}J_\infty(t^\infty_vv)
=\inf\limits_{v\in U_\infty^+}\left(\frac1{p'}-\frac12\right)(t^\infty_v)^{p'}\int_{\R^N}|v|^{p'}\, dx,
\end{equation}
with corresponding definitions for $U_\infty^+$ and $t_v^\infty$.

\begin{lemma}\label{lem:c_b}
Let $2_\ast\leq p< 2^\ast$ and $Q, Q_\infty\in L^\infty(\R^N)\backslash\{0\}$ be nonnegative functions.
Consider the mountain-pass level
$$
b=\inf\limits_{\gamma\in\Gamma}\max\limits_{t\in[0,1]}J(\gamma(t)), 
\quad\text{ where }\Gamma=\left\{\gamma\in C([0,1],L^{p'}(\R^N))\, :\, \gamma(0)=0\text{ and }J(\gamma(1))<0\right\}.
$$
Then
\begin{itemize}
\item[(i)] $c=b$.
\item[(ii)] If \eqref{eqn:asympt_per} holds, then $c\leq c_\infty$.
\end{itemize}
\end{lemma}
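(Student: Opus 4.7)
The plan is to treat the two parts separately. For part (i), I would prove $b \leq c$ and $b \geq c$. For $b \leq c$, given $v \in U^+$ one has $J(sv) = \frac{s^{p'}}{p'}\|v\|_{p'}^{p'} - \frac{s^2}{2}\int v \mK v \, dx \to -\infty$ as $s \to \infty$ (since $1 < p' < 2$ and $\int v\mK v > 0$), so for $R$ large enough the linear path $\gamma(t) = tRv$ lies in $\Gamma$ and its maximum equals $\sup_{s > 0} J(sv) = J(t_v v)$; combined with the fact that the supremum is $+\infty$ for $v \notin U^+ \cup \{0\}$, this gives $b \leq \inf_{v \in U^+} J(t_v v) = c$ by \eqref{eqn:c_inf_fiber}. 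For the reverse inequality, the strategy is to show that every $\gamma \in \Gamma$ meets the Nehari-type set $\cN := \{w \neq 0 : J'(w)w = 0\}$, on which $J(w) = \sup_{s > 0} J(sw) \geq c$. Setting $f(w) := J'(w)w = \|w\|_{p'}^{p'} - \int w \mK w\, dx$, the $L^{p'} \to L^p$ boundedness of $\mK$ yields $|\int w \mK w\, dx| \leq C\|w\|_{p'}^2$, hence $f(w) \geq \|w\|_{p'}^{p'}(1 - C\|w\|_{p'}^{2-p'}) > 0$ for small nonzero $w$, while $J(\gamma(1)) < 0$ together with $p' < 2$ forces $f(\gamma(1)) < 0$. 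Choosing $t^* := \inf\{t \in [0,1] : f(\gamma(t)) < 0\}$, continuity gives $f(\gamma(t^*)) = 0$; a short argument rules out $\gamma(t^*) = 0$ (otherwise a neighborhood of $t^*$ would consist of small nonzero values on which $f > 0$, contradicting the definition of $t^*$), so $\gamma(t^*) \in \cN$ and $\max_{[0,1]} J \circ \gamma \geq J(\gamma(t^*)) \geq c$.

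For part (ii), the approach is a translation argument. Given $v \in U_\infty^+$, I first approximate by a compactly supported element of $U_\infty^+$ (using density of $C_c^\infty(\R^N)$ in $L^{p'}(\R^N)$ and continuity of $w \mapsto J_\infty(t_w^\infty w)$ on $U_\infty^+$), so without loss of generality $\supp v$ is compact. Taking $y_n \in \Z^N$ with $|y_n| \to \infty$ and $v_n := v(\cdot - y_n)$, we have $\|v_n\|_{p'} = \|v\|_{p'}$, and by $\Z^N$-periodicity of $Q_\infty$, $\int v_n \mK_\infty v_n\, dx = \int v \mK_\infty v\, dx$, where $\mK_\infty$ denotes the operator obtained from $\mK$ by replacing $Q$ with $Q_\infty$. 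The crucial step is
\begin{align*}
\int v_n (\mK - \mK_\infty) v_n\, dx &= \int \bigl(Q^{\frac1p} - Q_\infty^{\frac1p}\bigr) v_n\, \mR\bigl(Q^{\frac1p} v_n\bigr)\, dx \\
&\quad + \int Q_\infty^{\frac1p} v_n\, \mR\bigl((Q^{\frac1p} - Q_\infty^{\frac1p}) v_n\bigr)\, dx \longrightarrow 0,
\end{align*}
which follows from \eqref{eqn:asympt_per} (yielding $\|(Q^{\frac1p} - Q_\infty^{\frac1p}) v_n\|_{p'} \to 0$ thanks to the compact support of $v_n$ being pushed to infinity), together with the $L^{p'} \to L^p$ continuity of $\mR$ and H\"older's inequality. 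Consequently $t_{v_n} \to t_v^\infty$ and $J(t_{v_n} v_n) \to J_\infty(t_v^\infty v)$, so $c \leq J_\infty(t_v^\infty v)$; infimizing over $v$ yields $c \leq c_\infty$.

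The main technical obstacle is the cross-term estimate in part (ii): converting the pointwise asymptotic vanishing of $Q - Q_\infty$ into an $L^{p'}$-norm bound using the compact support of $v$, and pushing it through the convolution operator $\mR$. In part (i) the delicate step is verifying that the Nehari crossing point $\gamma(t^*)$ is nonzero, which hinges on the quantitative positivity bound $f(w) > 0$ for small $w \neq 0$ derived from the boundedness of $\mK$.
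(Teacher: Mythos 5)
Your proof is correct and follows essentially the same route as the paper: for (i) the linear-path inequality $b\le c$ plus a Nehari-crossing (intermediate-value) argument giving $c\le b$, and for (ii) translation of a fixed test function along a sequence $y_n\to\infty$ together with the vanishing of the cross term $\int v_n(\mK-\mK_\infty)v_n\,dx$ forced by \eqref{eqn:asympt_per}. The one (harmless but unnecessary) deviation is your preliminary reduction to compactly supported $v$ in part (ii): the paper gets $\|(Q^{1/p}(\cdot+y_n)-Q_\infty^{1/p})v\|_{p'}\to 0$ directly for any fixed $v\in L^{p'}(\R^N)$ by splitting the integral into a large ball $B_r$, where the $L^\infty$ factor is small once $|y_n|$ is large, and the tail $B_r^c$, which is small since $v\in L^{p'}(\R^N)$.
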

\begin{proof}
(i) For each $v\in U^+$, we have $\lim\limits_{t\to\infty}J(tv)=-\infty$. Using \eqref{eqn:c_inf_fiber}, 
we obtain $c=\inf\limits_{v\in U^+}\ \sup\limits_{t>0}J(tv)\geq b$.
For the converse inequality, we note that since $p'<2$ there exists $\eta>0$ such that 
$$
J'(v)v=\int_{\R^N}|v|^{p'}\, dx -\int_{\R^N}Q^\frac1pv\mR(Q^\frac1pv)\, dx>0,\quad\text{ for all }0<\|v\|_{p'}\leq \eta.
$$
Also, if $\gamma\in\Gamma$, then $0>J(\gamma(1))
=\frac1{p'}\int_{\R^N}|\gamma(1)|^{p'}\, dx-\frac12\int_{\R^N}Q^\frac1p\gamma(1)\mR(Q^\frac1p\gamma(1))\, dx$
and this gives $J'(\gamma(1))\gamma(1)<0$. Since $\gamma(0)=0$ and $\|\gamma(1)\|_{p'}>\eta$, 
there is some $0<\widetilde{t}<1$ such that with $v=\gamma(\,\widetilde{t})$,
there holds $J'(v)v=0$ and $v\neq 0$. In particular, $v\in U^+$ and $t_v=1$, which implies that
$$
\max\limits_{t\in[0,1]}J(\gamma(t))\geq J(v)=J(t_vv)\geq c,
$$ 
and since $\gamma\in \Gamma$ was chosen arbitrarily,
we obtain $b\geq c$.

(ii) As a consequence of \eqref{eqn:asympt_per}, the following holds
for every $v\in L^{p'}(\R^N)$ and every sequence $(y_n)_n\subset\R^N$ such that $\lim\limits_{n\to\infty}|y_n|=\infty$:
\begin{align*}
\lim_{n\to\infty}\int_{\R^N}Q^\frac1p v(\cdot-y_n)\mR(Q^\frac1pv(\cdot-y_n))\, dx
&=\lim_{n\to\infty}\int_{\R^N}Q^\frac1p(\cdot+y_n) v\mR(Q^\frac1p(\cdot+y_n)v)\, dx\\
&=\int_{\R^N}Q_\infty^\frac1p v\mR(Q_\infty^\frac1pv)\, dx.
\end{align*}
Consider $v\in U_\infty^+$ and let $v_n=v(\cdot-y_n)$ for some sequence $(y_n)_n\subset\R^N$
such that $\lim\limits_{n\to\infty}|y_n|=\infty$. Then for $n$ large enough, the above property gives 
$v_n\in U^+$ and from \eqref{eqn:t_v}, $\lim\limits_{n\to\infty}t_{v_n}=t_v^\infty$. Consequently,
$$
c\leq \lim_{n\to\infty}\left(\frac1{p'}-\frac12\right)t_{v_n}^{p'}\int_{\R^N}|v_n|^{p'}\, dx
=\left(\frac1{p'}-\frac12\right)(t^\infty_v)^{p'}\int_{\R^N}|v|^{p'}\, dx,
$$
and since $v\in U_\infty^+$ was arbitrarily chosen, we conclude that $c\leq c_\infty$.
\end{proof}

\begin{proposition}\label{prop:c<c_inf}
Let $2_\ast\leq p<2^\ast$, $Q, Q_\infty\in L^\infty(\R^N)\backslash\{0\}$ be nonnegative functions 
such that \eqref{eqn:asympt_per} holds.
If $c<c_\infty$, then $J$ has a (nontrivial) critical point at level $c$.
\end{proposition}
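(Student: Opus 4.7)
The plan is to produce a Palais-Smale sequence for $J$ at level $c$ and use the strict inequality $c<c_\infty$ to rule out loss of compactness to infinity, thereby showing that the weak limit is a nontrivial critical point. By Lemma~\ref{lem:c_b}(i), $c$ coincides with the mountain-pass level $b$, and the geometry from Lemma~\ref{lem:MP} allows the classical mountain-pass theorem to supply a Palais-Smale sequence $(v_n)\subset L^{p'}(\R^N)$ for $J$ at level $c$. Lemma~\ref{lem:PS_sequences} then yields, up to a subsequence, a weak limit $v$ with $J'(v)=0$ and $J(v)\le c$, together with local strong convergence $1_B v_n \to 1_B v$ in $L^{p'}$. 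Once $v\neq 0$ is established, $J'(v)v=0$ forces $v\in U^+$ with $t_v=1$, giving $c\le\sup_{t>0}J(tv)=J(v)\le c$, so that $v$ is a critical point at level $c$.

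Suppose, aiming at a contradiction, that $v=0$. From $J(v_n)\to c>0$ and $J'(v_n)v_n\to 0$ we get that $\|v_n\|_{p'}^{p'}$ and $\int_{\R^N} Q^{1/p}v_n\,\mR(Q^{1/p}v_n)\,dx$ both tend to a common positive constant. The main technical step is to show that $(v_n)$ is also a Palais-Smale sequence for $J_\infty$ at the same level $c$. Writing $h:=Q^{1/p}-Q_\infty^{1/p}$, which is bounded and vanishes at infinity thanks to \eqref{eqn:asympt_per} and the inequality $|a^{1/p}-b^{1/p}|\le|a-b|^{1/p}$, the difference $J_\infty-J$ and its derivative along $(v_n)$ split into terms of the form $\int w\,h\,\mR(Q^{1/p}v_n)\,dx$ and $\int Q_\infty^{1/p}w\,\mR(h v_n)\,dx$. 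The local strong convergence $v_n\to 0$ in $L^{p'}_{\rm loc}$ together with the vanishing of $h$ at infinity gives $\|h v_n\|_{p'}\to 0$ by a split-ball estimate; the compactness of the embedding $W^{2,p'}_{\rm loc}(\R^N)\hookrightarrow L^p_{\rm loc}(\R^N)$ (valid since $p<2^*$) applied to $\mR(Q^{1/p}v_n)\in W^{2,p'}_{\rm loc}$ yields $\|h\,\mR(Q^{1/p}v_n)\|_p\to 0$ by a similar split. These two estimates deliver $J_\infty(v_n)\to c$ and $\|J_\infty'(v_n)\|_*\to 0$.

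Since the interaction term under $\mK_\infty$ retains a positive limit along $(v_n)$, Theorem~\ref{thm:nonvanishing} applied to the bounded sequence $Q_\infty^{1/p}v_n$ (combined with $Q_\infty\in L^\infty$) yields $R,\zeta>0$ and $(x_n)\subset\R^N$ with $\int_{B_R(x_n)}|v_n|^{p'}\,dx\ge\zeta$; the local strong convergence forces $|x_n|\to\infty$, and enlarging $R$ if needed we may take $x_n\in\Z^N$. By $\Z^N$-periodicity of $Q_\infty$ the translates $\tilde v_n:=v_n(\cdot+x_n)$ remain a Palais-Smale sequence for $J_\infty$ at level $c$, and Lemma~\ref{lem:PS_sequences} applied to $J_\infty$ provides a weak limit $\tilde v$ with $J_\infty'(\tilde v)=0$ and $J_\infty(\tilde v)\le c$; the concentration bound and local strong convergence for $\tilde v_n$ force $\tilde v\neq 0$. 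Then $\tilde v\in U_\infty^+$ and $t_{\tilde v}^\infty=1$, so $c_\infty\le J_\infty(\tilde v)\le c$, contradicting the hypothesis $c<c_\infty$. Hence $v\neq 0$ and the argument at the start concludes the proof.

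The main obstacle is the perturbation estimate $\|J_\infty'(v_n)-J'(v_n)\|_*\to 0$ along the sequence: because $Q-Q_\infty$ only vanishes at infinity pointwise and not uniformly, the naive H\"older bound is merely uniform in $n$. The decisive ingredient is the local compactness of the resolvent $\mR$ in the subcritical range $p<2^*$, which enables the core/tail decomposition that drives both the energy and derivative estimates.
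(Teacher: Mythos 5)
Your overall strategy is sound and your estimates go through, but the proof as written does \emph{not} cover the full stated range. The proposition allows $2_\ast \leq p < 2^\ast$, whereas your argument invokes Theorem~\ref{thm:nonvanishing}, which is only available for $2_\ast < p < 2^\ast$. So the endpoint case $p = 2_\ast$ is left unproven. This is a genuine gap: your whole mechanism for ruling out $v_n \weakto 0$ — show $(v_n)$ is also a Palais-Smale sequence for $J_\infty$, concentrate via nonvanishing, translate, extract a nontrivial critical point $\tilde v$ of $J_\infty$ — relies on the nonvanishing property, so the argument breaks down at $p = 2_\ast$ with no obvious patch.

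The paper avoids this entirely by a lighter argument. After the same decomposition that you use, it only needs the scalar quantity $\int Q_\infty^{1/p}v_n\,\mR(Q_\infty^{1/p}v_n)\,dx - \int Q^{1/p}v_n\,\mR(Q^{1/p}v_n)\,dx \to 0$ (not the full operator estimate $\|J_\infty'(v_n)-J'(v_n)\|_\ast\to 0$ that you establish), which combined with $J(v_n)\to c$ and $J'(v_n)v_n\to 0$ gives that the common limit is $\frac{2p'c}{2-p'}>0$. From this it follows directly that $v_n\in U_\infty^+$ for $n$ large with $t^\infty_{v_n}\to 1$, and then the fibering characterization $c_\infty = \inf_{w\in U_\infty^+}J_\infty(t^\infty_w w)$ immediately yields $c_\infty \leq \lim_n J_\infty(t^\infty_{v_n}v_n) = c$, a contradiction. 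No translation, no extraction of a critical point of $J_\infty$, and — crucially — no nonvanishing. This works for all $2_\ast\leq p<2^\ast$, using only the local compactness of $1_{B_r}\mR$ which is available down to $p = 2_\ast$.

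To summarize: your shared first steps (Palais-Smale sequence via $c = b$, weak limit with $J'(v)=0$, treat $v\neq 0$ and $v=0$ separately, core/tail estimate exploiting \eqref{eqn:asympt_per} and compactness of $1_{B_r}\mR$) are correct and identical in spirit to the paper's. Where you diverge — passing to $J_\infty$ as a Palais-Smale functional, invoking Theorem~\ref{thm:nonvanishing}, translating, and appealing to a critical point of $J_\infty$ — is both heavier than necessary and restricts $p$ to the open range. Replacing the concentration-compactness step by the direct fibering comparison $c_\infty \leq J_\infty(t^\infty_{v_n}v_n)$ closes the gap and streamlines the argument considerably.
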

\begin{proof}
Since $c=b$ and $b$ is the mountain-pass level associated to $J$, a standard deformation argument 
(see e.g. \cite[Lemma 6.1]{evequoz-weth-dual}) gives
the existence of a Palais-Smale sequence $(v_n)_n\subset L^{p'}(\R^N)$ for $J$ at level $c$. 
According to Lemma~\ref{lem:PS_sequences}, we can assume
(going to a subsequence) that $v_n\weakto v$ weakly in $L^{p'}(\R^N)$ where $J'(v)=0$ and 
$J(v)\leq\liminf\limits_{n\to\infty}J(v_n)=c$.

If $v\neq 0$, then we are done, since $J'(v)=0$ gives $v\in U^+$ and $t_v=1$, from which $J(v)\geq c$ follows.
Assuming by contradiction that $v_n\weakto 0$ holds, we write
\begin{align*}
\int_{\R^N}Q_\infty^\frac1pv_n\mR(Q_\infty^\frac1pv_n)\, dx
 &=\int_{\R^N}Q^\frac1pv_n\mR(Q^\frac1pv_n)\, dx\\
 &\qquad +\int_{\R^N}(Q_\infty^\frac1p-Q^\frac1p)v_n\mR\bigl([Q_\infty^\frac1p+Q^\frac1p]v_n\bigr)\, dx
\end{align*}
and see that for $r>0$,
\begin{align*}
&\left|\int_{\R^N}(Q_\infty^\frac1p-Q^\frac1p)v_n\mR\bigl([Q_\infty^\frac1p+Q^\frac1p]v_n\bigr)\, dx\right|\\
&\qquad\leq\Bigl(\|Q_\infty\|_\infty^\frac1p+\|Q\|_\infty^\frac1p\Bigr)
\Bigl[C\|v_n\|_{p'}^2\operatorname*{ess\ sup}\limits_{|x|>r}\bigl|Q_\infty(x)^\frac1p-Q(x)^\frac1p\bigr|\\
&\qquad\qquad +\|v_n\|_{p'}\|1_{B_r(0)}\mR\bigl([Q_\infty^\frac1p+Q^\frac1p]v_n\bigr)\|_p\Bigr].
\end{align*}
Since by assumption \eqref{eqn:asympt_per} we have 
$\operatorname*{ess\ sup}\limits_{|x|>r}|Q_\infty(x)^\frac1p-Q(x)^\frac1p|\to 0$, as $r\to\infty$, 
and since the operator $1_{B_r(0)}\mR$: $L^{p'}(\R^N)$ $\to$ $L^p(\R^N)$
is compact for every $r>0$ (see \cite[Lemma 4.1]{evequoz-weth-dual}), we conclude that
\begin{align*}
\lim_{n\to\infty}\int_{\R^N}Q_\infty^\frac1pv_n\mR(Q_\infty^\frac1pv_n)\, dx
&=\lim_{n\to\infty}\int_{\R^N}Q^\frac1pv_n\mR(Q^\frac1pv_n)\, dx\\
&=\left(\frac1{p'}-\frac12\right)^{-1}\lim_{n\to\infty}[J(v_n)-\frac1{p'}J'(v_n)v_n]=\frac{2p'c}{2-p'}>0.
\end{align*}
Hence, for $n$ large enough, $v_n\in U_\infty^+$ and there holds
\begin{align*}
(t^\infty_{v_n})^{2-p'}=\frac{\int_{\R^N}|v_n|^{p'}\, dx}{\int_{\R^N}Q_\infty^\frac1pv_n\mR(Q_\infty^\frac1pv_n)\, dx}
=\frac{(\frac1{p'}-\frac12)^{-1}[J(v_n)-\frac12J'(v_n)v_n]}{(\frac1{p'}-\frac12)^{-1}[J(v_n)-\frac1{p'}J'(v_n)v_n]+o(1)}\to 1, 
\end{align*}
as $n\to\infty$. As a consequence, we find
\begin{align*}
c_\infty\leq \lim_{n\to\infty}J_\infty(t_{v_n}^\infty v_n)
=\left(\frac1{p'}-\frac12\right)\lim_{n\to\infty}(t_{v_n}^\infty)^2\int_{\R^N}Q_\infty^\frac1pv_n\mR(Q_\infty^\frac1pv_n)\, dx
=c,
\end{align*}
which contradicts the assumption $c<c_\infty$ and ends the proof.
\end{proof}

We now restrict to the case $p>2_\ast$ in which the nonvanishing property (Theorem~\ref{thm:nonvanishing}) 
holds for the resolvent $\mR$.

In this case, we note that the level $c_\infty$ corresponding to the periodic functional $J_\infty$ is attained and coincides 
with the least-energy level, i.e.,
$$
c_\infty=\inf\{J_\infty(v)\, :\, v\in L^{p'}(\R^N), \ v\neq 0\text{ and }J_\infty'(v)=0\}.
$$
Indeed, using the weak lower semicontinuity of $J_\infty$ along Palais-Smale sequences (see Lemma \ref{lem:PS_sequences}), 
we obtain that the nontrivial critical point $w$ of $J_\infty$ given by the proof of \cite[Theorem 6.2]{evequoz-weth-dual} has no 
higher energy than the mountain-pass level $b_\infty$. Since by Lemma~\ref{lem:c_b}(i), 
$c_\infty=b_\infty$, and since every critical point $v$ of $J_\infty$ satisfies $J_\infty(v)\geq c_\infty$, 
we find that $J_\infty(w)=c_\infty$.

As a consequence, we obtain the following existence result.
\begin{theorem}\label{thm:asympt_per}
Let $2_\ast<p<2^\ast$, $Q, Q_\infty\in L^\infty(\R^N)\backslash\{0\}$ be nonnegative functions such that $Q_\infty$ is $\Z^N$-periodic.
If $Q_\infty\leq Q$ a.e. on $\R^N$
and \eqref{eqn:asympt_per} is satisfied,
then $J$ has a nontrivial critical point at level $c$ and there holds $c=\inf\{J(v)\, :\, v\in L^{p'}(\R^N), \ v\neq 0\text{ and }J'(v)=0\}$.
\end{theorem}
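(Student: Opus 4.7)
The plan is to carry out the standard dichotomy for asymptotically periodic problems at the mountain-pass level $c$: either the Palais--Smale sequence concentrates and delivers a critical point of $J$ at level $c$ (which then automatically equals the ground-state level), or its mass escapes to infinity and can be recovered, after translation, as a ground state of $J_\infty$. The hypothesis $Q_\infty\leq Q$ will be used to rule out the second alternative by producing a strict inequality $c<c_\infty$.

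First I would invoke Lemma~\ref{lem:MP} together with the identity $b=c$ from Lemma~\ref{lem:c_b}(i) and the standard deformation lemma to obtain a Palais--Smale sequence $(v_n)_n\subset L^{p'}(\R^N)$ for $J$ at level~$c$. By Lemma~\ref{lem:PS_sequences}, up to a subsequence $v_n\weakto v$ in $L^{p'}(\R^N)$ with $J'(v)=0$ and $J(v)\leq c$. If $v\neq 0$, then $v\in U^+$ with unique fibering point $t_v=1$, so $J(v)=\sup_{t>0}J(tv)\geq c$, hence $J(v)=c$. Since every nontrivial critical point $w$ of $J$ satisfies $J(w)=\sup_{t>0}J(tw)\geq c$, this simultaneously identifies $c$ with the dual ground-state level.

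The heart of the argument is to exclude $v=0$. From $J(v_n)\to c>0$ and $J'(v_n)v_n\to 0$ one obtains $\|v_n\|_{p'}^{p'}\to 2p'c/(2-p')>0$, so that Theorem~\ref{thm:nonvanishing} (applied to the bounded sequence $Q^{1/p}v_n$) furnishes $R,\zeta>0$ and, up to enlarging $R$, a sequence $(x_n)\subset\Z^N$ with $\int_{B_R(x_n)}|v_n|^{p'}\,dx\geq\zeta$; the local strong convergence in Lemma~\ref{lem:PS_sequences} then forces $|x_n|\to\infty$. The splitting $\R^N=B_r\cup(\R^N\setminus B_r)$ combined with $v_n\weakto 0$, the local compactness of $1_{B_r}\mR$ and the asymptotic periodicity \eqref{eqn:asympt_per} (exactly as in the proof of Proposition~\ref{prop:c<c_inf}) yields
\[
\bigl\|Q^{\frac1p}\mR(Q^{\frac1p}v_n)-Q_\infty^{\frac1p}\mR(Q_\infty^{\frac1p}v_n)\bigr\|_p\to 0,
\]
so $\tilde v_n:=v_n(\cdot+x_n)$ is a Palais--Smale sequence for $J_\infty$ at level~$c$. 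Lemma~\ref{lem:PS_sequences} applied to $J_\infty$ gives $\tilde v_n\weakto\tilde v$ with $J_\infty'(\tilde v)=0$; the mass concentration in $B_R$ passes to the strong local limit, so $\tilde v\neq 0$. Since $c_\infty$ is the ground-state level of $J_\infty$ and is attained (as recalled just before the theorem), $J_\infty(\tilde v)\geq c_\infty$, and combined with Lemma~\ref{lem:c_b}(ii) this forces $c_\infty\leq J_\infty(\tilde v)\leq c\leq c_\infty$, so $c=c_\infty$ and $\tilde v$ realizes~$c_\infty$.

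Finally I would derive a contradiction by testing $J$ against $\tilde v$. Setting $u_\infty:=\mR(Q_\infty^{1/p}\tilde v)$, the equation $J_\infty'(\tilde v)=0$ gives $\tilde v=Q_\infty^{(p-1)/p}|u_\infty|^{p-2}u_\infty$, and the symmetry of $\mR$ produces, with $\delta:=Q^{1/p}-Q_\infty^{1/p}\geq 0$,
\[
\int_{\R^N}\!Q^{\frac1p}\tilde v\,\mR(Q^{\frac1p}\tilde v)\,dx-\int_{\R^N}\!Q_\infty^{\frac1p}\tilde v\,\mR(Q_\infty^{\frac1p}\tilde v)\,dx=2\!\int_{\R^N}\!\bigl(Q^{\frac1p}Q_\infty^{\frac{p-1}{p}}-Q_\infty\bigr)|u_\infty|^p\,dx+\!\int_{\R^N}\!\delta\tilde v\,\mR(\delta\tilde v)\,dx.
\]
The pointwise inequality $Q^{1/p}Q_\infty^{(p-1)/p}\geq Q_\infty$ shows the first term is nonnegative, and it is strictly positive because $Q\not\equiv Q_\infty$ (the case $Q\equiv Q_\infty$ being already covered by Theorem~\ref{thm:infinite_periodic}) and the set $\{u_\infty\neq 0\}$ has full measure in any region where the inequality is strict. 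Granted that the sum is positive, one obtains $\tilde v\in U^+$ with $t_{\tilde v}<1$ and
\[
c\leq\sup_{t>0}J(t\tilde v)=\Bigl(\tfrac1{p'}-\tfrac12\Bigr)t_{\tilde v}^{p'}\|\tilde v\|_{p'}^{p'}<\Bigl(\tfrac1{p'}-\tfrac12\Bigr)\|\tilde v\|_{p'}^{p'}=c_\infty,
\]
contradicting $c=c_\infty$. The main obstacle I anticipate is controlling the indefinite remainder $\int\delta\tilde v\,\mR(\delta\tilde v)\,dx$: since $\mR$ has the sign-indefinite principal-value symbol $(|\xi|^2-1)^{-1}$, this term has no a priori sign and one must exploit both $Q_\infty\leq Q$ pointwise and the decay of $\delta$ at infinity (together with the specific form $\tilde v=Q_\infty^{(p-1)/p}|u_\infty|^{p-2}u_\infty$ and the regularity/asymptotics of $u_\infty$) to absorb it into the positive first term.
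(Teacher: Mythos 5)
Your proposal follows the classical ``splitting/translation'' route, but it diverges from the paper's argument in a crucial way and the divergence leaves a genuine gap that you yourself flag at the end.

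The gap is in the last step. After producing a ground state $\tilde v$ of $J_\infty$ you test $J$ directly against~$\tilde v$ and try to show
$\int Q^{1/p}\tilde v\,\mR(Q^{1/p}\tilde v)\,dx > \int Q_\infty^{1/p}\tilde v\,\mR(Q_\infty^{1/p}\tilde v)\,dx$. Your own expansion shows that the difference equals a nonnegative (but possibly small) local term plus the indefinite cross term $\int\delta\tilde v\,\mR(\delta\tilde v)\,dx$ with $\delta=Q^{1/p}-Q_\infty^{1/p}$. Because $\mR$ has the sign-indefinite symbol $\mathrm{p.v.}(|\xi|^2-1)^{-1}$ (its real quadratic form is not positive), there is no reason the total should be positive; nothing in the hypotheses gives quantitative control of $\int\delta\tilde v\,\mR(\delta\tilde v)\,dx$ by $\int(Q^{1/p}Q_\infty^{(p-1)/p}-Q_\infty)|u_\infty|^p\,dx$, and indeed the decay of $\delta$ at infinity does not by itself yield a sign. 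So the proposed contradiction does not close. Relatedly, your scheme is implicitly trying to establish the strict inequality $c<c_\infty$ whenever $Q\gneqq Q_\infty$; the theorem does not actually require that, and the paper makes no attempt to prove it.

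The paper avoids the indefinite term entirely by choosing a \emph{different} test function. Rather than plugging the $J_\infty$-ground state $w$ into $J$, it sets $v:=(Q_\infty/Q)^{1/p}w$. This makes $Q^{1/p}v=Q_\infty^{1/p}w$, so the quadratic part of $J$ at $v$ \emph{coincides exactly} with the quadratic part of $J_\infty$ at~$w$, and the only change is in the $L^{p'}$-term, where $(Q_\infty/Q)^{p'-1}\le 1$ immediately gives $J(t_vv)\le J_\infty(t_vw)\le J_\infty(w)=c_\infty$. This yields $J(v)=c$ and $t_v=1$ directly, \emph{even when} $c=c_\infty$, with no sign-indefinite remainder to absorb. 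The proof then shows $J'(v)=0$ by a one-line fibering argument (comparing $J(t_s(v+s\varphi))$ with $J(t_sv)$ and letting $s\to0$). If you want to salvage your approach, the natural fix is precisely this substitution: replace the step ``test $J$ at $\tilde v$'' by ``test $J$ at $(Q_\infty/Q)^{1/p}\tilde v$'', which removes the problematic cross term and makes the concentration-compactness machinery in your steps 1--4 unnecessary.
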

\begin{proof}
If $c<c_\infty$, the conclusion follows from Proposition~\ref{prop:c<c_inf}. Assume now that $c=c_\infty$.
Then, according to the above remark, there exists $w\in L^{p'}(\R^N)$ such that $J_\infty'(w)=0$ and $J_\infty(w)=c_\infty$. 
Since $0\leq Q_\infty\leq Q$ a.e. in $\R^N$, by assumption, we may consider
$v:=\left(\frac{Q_\infty}{Q}\right)^\frac1p w\in L^{p'}(\R^N)$. There holds
$$
\int_{\R^N}Q^\frac1pv\mR(Q^\frac1pv)\, dx = \int_{\R^N}Q_\infty^\frac1pw\mR(Q_\infty^\frac1pw)\, dx=\int_{\R^N}|w|^{p'}\, dx>0,
$$
and this gives $v\in U^+$. 
As a consequence, we find
\begin{align*}
c\leq J(t_vv)
&=\frac{t_v^{p'}}{p'}\int_{\R^N}\left(\frac{Q_\infty}{Q}\right)^{p'-1} |w|^{p'}\, dx-\frac{t_v^2}2
\int_{\R^N}Q_\infty^\frac1pw\mR(Q_\infty^\frac1pw)\, dx\\
&\leq J_\infty(t_vw)\leq J_\infty(w)=c_\infty,
\end{align*}
and since $c=c_\infty$ by assumption, we conclude that $J_\infty(t_vw)=J_\infty(w)$, and 
therefore $t_v=t^\infty_w=1$. Hence, $J(v)=c$ and $J'(v)v=0$.

In order to show that $J'(v)=0$, we proceed as follows: Consider $\varphi\in L^{p'}(\R^N)$.
Since $v\in U^+$ and $U^+$ is an open set, there is $\delta>0$
such that $v+s\varphi\in U^+$ for all $|s|<\delta$. 
Using the characterization \eqref{eqn:c_inf_fiber} of $c$, 
and since $J(v)=c$, $t_v=1$, 
we can write for $|s|<\delta$:
\begin{align*}
0\leq J\bigl(t_s(v+s\varphi)\bigr)-J(v)
&\leq J\bigl(t_s(v+s\varphi)\bigr)-J\bigl(t_sv\bigr)=s t_sJ'\bigl(t_s(v+s\tau\varphi)\bigr)\varphi,
\end{align*}
where $t_s=t_{v+s\varphi}$ and where $0\leq\tau=\tau(s)\leq1$ is given by the mean-value theorem. It follows that
\begin{align*}
J'\bigl(t_{-s}(v-s\tau\varphi)\bigr)\varphi\leq 0
\leq J'\bigl(t_s(v+s\tau\varphi)\bigr)\varphi
\end{align*}
for all $0<s<\delta$. Letting $s\to 0$ we obtain $J'(v)\varphi=0$,
since the functions $s\mapsto t_{v\pm s\varphi}$ are continuous on $(-\delta,\delta)$ and $t_v=1$.
Therefore, $v$ is a nontrivial critical point for $J$ at level $c$.
\end{proof}

\begin{remark}
The preceding result gives conditions ensuring the existence of a nontrivial critical point for $J$, and one might wonder
whether or not, similar to the periodic case, the existence of infinitely many solutions can be proven
under the assumption \eqref{eqn:asympt_per}. 

It is a fact that the dual functional $J$ associated to \eqref{eq:33b} bears many similarities with
the one considered by Alama and Li in \cite{alama-li92,alama-li92b} and associated to a NLS equation with frequency
in a spectral gap. In particular, a splitting property holds for Palais-Smale sequences of the dual functional, and the arguments 
in \cite{alama-li92b} can be used with only minor changes to give the existence of infinitely many critical points for $J$, assuming that \eqref{eqn:asympt_per} holds and that
$$
\text{there exist }0<a<\frac{c_\infty}{2}\text{ and }v_0\in \cK_\infty^c\text{ such that }v_0
\text{ is isolated in }\cK_\infty^{c+a}.
$$
It is however not clear whether this last assumption can be verified for some class of periodic potentials $Q_\infty$.

\end{remark}

\section*{Acknowledgements}
This research is supported by the Grant WE 2821/5-1 of the Deutsche Forschungsgemeinschaft (DFG). The author
would like to thank Tobias Weth for pointing out the problem studied here and for valuable advice.

\bibliographystyle{abbrv}
\bibliography{literatur.bib}

\end{document}